\documentclass[journal]{IEEEtran}
\usepackage{amsthm}

\usepackage{amsmath,amssymb,amsfonts}
\usepackage{xcolor}
\usepackage{graphicx}
\usepackage{subcaption}

\usepackage{hyperref}
\hypersetup{colorlinks=true, linkcolor=blue, filecolor=magenta,
urlcolor=cyan}
\usepackage{algorithm}
\usepackage{algpseudocode}

\newtheorem{theorem}{Theorem}[section]
\newtheorem{lemma}[theorem]{Lemma}
\newtheorem{corollary}[theorem]{Corollary}
\newtheorem{proposition}[theorem]{Proposition}
\newtheorem{assumption}[theorem]{Assumption}
\newtheorem{definition}[theorem]{Definition}
\newtheorem{remark}{Remark}[section]

\newtheorem{problem}{Problem}

\newcommand{\subscr}[2]{#1_{\textup{#2}}}

\newcommand{\rank}{\operatorname{rank}}

\newcommand{\real}{\ensuremath{\mathbb{R}}}
\newcommand{\diff}{\mathrm{d}}

\newcommand{\setdef}[2]{\{#1 \; | \; #2\}}

\newcommand{\bulletsym}{\hbox{$\bullet$}}
\newcommand{\bulletend}{\relax\ifmmode\else\unskip\hfill\fi\bulletsym}

\newcommand{\longthmtitle}[1]{\mbox{}{\emph{(#1):}}}

\newcommand{\oprocendsymbol}{\hbox{$\bullet$}} 
\newcommand{\oprocend}{\relax\ifmmode\else\unskip\hfill\fi\oprocendsymbol}

\begin{document}
\title{Accelerated Safe Gradient Flow}


\author{Xuting Gao, Sonia Mart\'{i}nez \thanks{X. Gao and S.
Mart\'{i}nez are with the Department of Mechanical and Aerospace
Engineering at the University of California, San Diego, La Jolla, CA
92092 USA. (e-mail: \{xug003, soniamd\}@ucsd.edu.)} }

\maketitle

\begin{abstract}
  This letter introduces an accelerated safe gradient flow (A-SGF) for
  strongly convex constrained optimization problems. The A-SGF filters
  a nominal second-order accelerated flow through a second-order
  control barrier function quadratic program. We establish conditions
  for strict feasibility and local Lipschitz continuity of the
  resulting vector field, prove forward invariance of a lifted safe
  set, and show that its unique equilibrium is the constrained
  minimizer. A variational Lyapunov analysis isolates the effect of
  the safety correction and gives explicit conditions on the barrier
  gains under which A-SGF retains the same convergence rate as the
  unconstrained Nesterov accelerated method. A nonlinear 2D example
  demonstrate anytime feasibility and faster convergence of the
  algorithm.
\end{abstract}

\section{Introduction}
The safe gradient flow (SGF)~\cite{AA-JC:24-tac} casts a constrained
optimization in terms of a safety-critical control problem. By
filtering the negative gradient through control barrier functions
(CBFs), the SGF keeps every trajectory feasible while steering it
toward a constrained local minimizer. This viewpoint is particularly
appealing when the optimization dynamics are interconnected with a
physical system, where intermediate infeasible solutions may be
inadmissible. Thus, the SGF framework has been extended to safe
feedback optimization~\cite{GD-PM-JC-WPMHH:25-cdc,GD-JC-WPMHH:25} and
applied successfully to safety-critical optimal
regulation~\cite{AC-YC-BC-JC-EDA:24-tcst}.

Acceleration, on the other hand, plays a central role in first-order
optimization. Momentum methods, originating with Polyak's heavy-ball
method~\cite{BTP:64}, and Nesterov's accelerated gradient
method~\cite{YEN:83}, can significantly improve convergence. Their
continuous-time interpretations clarify the acceleration
mechanism~\cite{WS-SB-EJC:16}, 
including its variational and Lyapunov
perspectives~\cite{AW-ACW-MIJ:16,ACW-BR-MIJ:21}. For an $L$-smooth and
$\mu$-strongly convex objective, Nesterov's accelerated method for
strongly convex functions (\texttt{NAG-SC}) exhibits the
characteristic $1/\sqrt{\kappa}$ convergence-rate dependence, where
$\kappa=L/\mu$, compared with $1/\kappa$ for standard gradient flow.

Safety-critical control further motivates the adaptation of
accelerated methods with anytime safety; however, this task is
nontrivial. Projection-based inertial dynamics can preserve
feasibility, but the resulting vector field is generally
nonsmooth~\cite{HA-FA:00,MM-MIJ:26}. Primal-dual accelerated flows
drive a feasibility residual to zero asymptotically and therefore do
not guarantee that the primal trajectory remains feasible at every
finite time~\cite{XZ-JL-JC:23}. More fundamentally, when constraints
become active, the constraint-handling mechanism modifies the nominal
accelerated dynamics. Consequently, the nominal accelerated
convergence rate is not automatically retained. Precisely, due to the
properties of the SGF, overcoming these issues by integrating the
\texttt{NAG-SC} with  SGFs seems viable.  

Motivated by these questions, we develop an \emph{accelerated safe
  gradient flow} for constrained, strongly convex optimization. The
proposed method takes \texttt{NAG-SC} as the nominal second-order flow
and filters its acceleration through a second-order CBF quadratic
program. The manuscript's contributions are fourfold. First, we
formulate A-SGF and establish conditions for strict feasibility of the
CBF-QP and local Lipschitz continuity of the resulting vector field,
thereby guaranteeing well-defined dynamics. Second, we prove forward
invariance of the lifted safe set and show that A-SGF introduces no
undesired equilibria. Third, through a Euclidean variational analysis,
we characterize how the CBF correction enters the dissipation
inequality and derive checkable conditions on the first- and
second-order CBF gains under which the nominal $1/\sqrt{\kappa}$
convergence rate is retained. Finally, numerical examples illustrate
the safety and acceleration properties of
A-SGF.\footnote{\emph{Notation:} 
The set $C^i(\mathcal{X}, \mathcal{Y})$ 
denotes all the $i$-th continuously differentiable functions mapping from
$\mathcal{X}$ to $\mathcal{Y}$, with $C^\infty(\mathcal{X},\mathcal{Y})$
denoting the smooth function set. For $f\in C^2(\real^n,\real)$, $\nabla f(x)$
and $\nabla^2 f(x)$ denote its gradient and Hessian at $x$, respectively.  For
$a\in\real$, define $[a]_+\:=\max\{a,0\}$. Given $b\in C^1(\real^n,\real)$ and a
vector field $v:\real^n\to\real^n$, the Lie derivative of $b$ along $v$ is $
L_vb(x):=\nabla b(x)^\intercal v(x)$, and $L_v^r b(x) := L_v(L_v^{r-1}b(x))$,
for $r > 0$. An extended class-$\mathcal K$ function is a continuous, strictly
increasing function $\alpha:\real\to\real$ s.t.~$\alpha(0)=0$ and
$\lim_{r\to\infty}\alpha(r)=\infty$; the set of such functions is denoted by
$\mathcal K^e$.  For a positive integer $k$, let $[k]:=\{1,\ldots,k\}$. 
The number of elements of $\mathcal{A}$ is denoted as $|\mathcal{A}|$.
For a vector $v\in \real^k$, $\| v \|$ denotes its $2$-norm and $\| v
\|_1$ denotes its $1$-norm.
}

\section{Preliminaries and Problem Formulation}
\label{sec:preliminaries}

This section briefly presents background theory on constrained
optimization, the safe gradient flow and barrier functions that we
need to present the problem formulation.

\subsection{Constrained Optimization and Optimality Conditions}
\label{subsec:constrained-opt}
We consider the constrained optimization problem
\begin{equation}
\label{eq:origin-opt}
    \begin{aligned}
        \min_{x \in \real^n} \;  J(x), \qquad
        \text{s.t.} \; h_i(x) \geq 0,
        \qquad i \in [k],
    \end{aligned}
\end{equation}
with objective $J:\real^n\to\real$ and constraints
$h_i:\real^n\to\real$, $i\in[k]$. The corresponding feasible set is $
\mathcal{C} := \setdef{x\in\real^n}{h_i(x)\geq 0,\;i\in[k]}$. For
$x\in\mathcal{C}$, let $ \mathcal{I}(x) := \setdef{i\in[k]} {h_i(x)=0}
$ denote the active constraint set. We impose the following
assumptions throughout the paper.

\begin{assumption}[Convexity]
    \label{assump:cvx}
    The objective $J \in C^2(\real^n, \real)$ is $\mu$-strongly convex
    and $L$-smooth. The constraints $h_i \in C^3(\real^n, \real)$ are
    concave and satisfy Slater's condition \footnote{\emph{Slater's
    condition} holds if  
    $\exists \bar{x}\in\real^n$ s.t.~$h_i(\bar x) > 0$, for $i \in [k]$.
    }.\oprocend
\end{assumption}

Under Assumption~\ref{assump:cvx}, $\mathcal{C}$ is a closed convex
set with nonempty interior, and~\eqref{eq:origin-opt} admits a unique
optimizer $x^\star$. Moreover, Slater's condition guarantees strong
duality and the existence of a KKT multiplier
$\lambda^\star\in\real_{\geq0}^k$ satisfying
\begin{equation*}
    \nabla J(x^\star)-
    \sum_{i=1}^k
    \lambda_i^\star\nabla h_i(x^\star)= 0,
    \; \;
    \lambda_i^\star h_i(x^\star)=0, \; i \in [k].
\end{equation*} 
Since the problem is convex, these KKT conditions are both necessary
and sufficient for global optimality.

\subsection{Accelerated Gradient Flow}
\label{subsec:accelerated-flow}

Accelerated methods admit a continuous-time interpretation as
second-order dissipative gradient systems
\cite{BTP:64,ACW-BR-MIJ:21,BS-SSD-MIJ-WJS:22}. For a $\mu$-strongly
convex and $L$-smooth objective, consider the nominal accelerated
dynamics
\begin{equation}
\label{eq:nominal-accelerated-flow}
    \ddot{x} +
    2\sqrt{\mu/L}\,\dot{x} +
    1/L\nabla J(x)
    = 0.
\end{equation}
These dynamics coincide with the continuous-time Heavy-Ball and
Nesterov's accelerated gradient methods for strongly convex objectives
(\texttt{NAG-SC})~\cite{ACW-BR-MIJ:21,BS-SSD-MIJ-WJS:22}, for
particular coefficients.  
Introducing the velocity state, \eqref{eq:nominal-accelerated-flow}
can be written as
\begin{equation}
\label{eq:nominal-state-space}
    \dot{x}=v,
    \qquad
    \dot{v}=\subscr{a}{nom}(x,v),
\end{equation}
where the nominal acceleration is $\subscr{a}{nom}(x,v) :=
-2\sqrt{\mu/L}\,v - 1/L\nabla J(x)$. The
flow~\eqref{eq:nominal-accelerated-flow} exhibits an accelerated
exponential convergence rate with
$1/\sqrt{\kappa}$~\cite{ACW-BR-MIJ:21}.

\subsection{Safe Gradient Flow and High-Order CBFs}
\label{subsec:sgf}
The SGF ~\cite{AA-JC:24-tac} combines the negative gradient flow of an
objective function with a CBF-based safety filter.  Thus, initially
feasible trajectories can remain feasible while converging to KKT
points of the constrained optimization.
For $\alpha>0$, define
\begin{align}
    \mathcal{G}_{\alpha}(x) :=
    \arg\min_{\xi\in\real^n}
    \quad&
    \frac{1}{2}
    \Vert \xi+1/L \nabla J(x)\Vert^2 
    \label{eq:sgf}
    \\
    \text{s.t.}\quad&
    \nabla h_i(x)^\intercal\xi
    \geq
    -\alpha h_i(x),
    \; \; i \in [k]. \notag
\end{align}

\begin{definition}[SGF]
\label{def:sgf}
The dynamical system $\dot x = \mathcal{G}_\alpha (x)$ with
$\mathcal{G}_{\alpha}$ defined in~\eqref{eq:sgf} is called the
\emph{safe gradient flow}.
\oprocend
\end{definition}

\begin{lemma} 
  [Lipschitzness of SGF~{ \cite{{AA-JC:24-tac}}}] If $\{h_i\}_{i=1}^k$
  satisfy the MFCQ \footnote{ The \emph{Mangasarian--Fromovitz
  constraint qualification} (MFCQ) holds at $x \in \mathcal{C}$ if
  there exists $d\in\real^n$ such that $\nabla h_i(x)^\intercal d >
  0$, for all $i\in \mathcal{I}(x)$. } and CRC \footnote{ The
  \emph{constant rank condition}~(CRC) holds at $x$ if there exists a
  neighborhood $\mathcal{U}$ of $x$ s.t., $\forall
  \mathcal{J}\subseteq\mathcal{I}(x)$, $\forall y \in \mathcal{U}$,
  $\rank[\nabla h_i(y)]_{i\in\mathcal{J}}$ is constant. } for all
  $x\in\mathcal{C}$, then ~\eqref{eq:sgf} is strictly feasible and
  $\mathcal{G}_\alpha(x)$ is locally Lipschitz, for all $x \in
  \mathcal{C}$.  Further, the CBF inequalities in~\eqref{eq:sgf}
  render $\mathcal{C}$ forward invariant.  \oprocend
\end{lemma}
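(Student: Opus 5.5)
We establish the three claims in order: strict feasibility, local Lipschitz continuity, and forward invariance. Throughout, write $c_i(x,\xi) := \nabla h_i(x)^\intercal\xi + \alpha h_i(x)$, so the feasible set of~\eqref{eq:sgf} is $\{\xi : c_i(x,\xi)\ge 0,\ i\in[k]\}$.

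\emph{Strict feasibility.} Fix $x\in\mathcal{C}$ and take the MFCQ direction $d$ with $\nabla h_i(x)^\intercal d>0$ for $i\in\mathcal{I}(x)$. For inactive $i$ we have $h_i(x)>0$, so $c_i(x,td)=t\nabla h_i(x)^\intercal d+\alpha h_i(x)>0$ for every sufficiently small $t>0$. For active $i$ we have $c_i(x,td)=t\nabla h_i(x)^\intercal d>0$ for every $t>0$. Choosing $t$ small enough handles both groups at once, so $\xi=td$ satisfies all constraints strictly. By continuity of $(x,\xi)\mapsto c_i$, the same $\xi$ remains strictly feasible for all $y$ in a neighborhood of $x$. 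The objective is strongly convex in $\xi$, so $\mathcal{G}_\alpha(y)$ is a well-defined singleton near each $x\in\mathcal{C}$.

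\emph{Local Lipschitz continuity.} We treat~\eqref{eq:sgf} as a parametric strongly convex QP in $\xi$, with parameter $x$ entering through $\nabla h_i$, $h_i$ and $\nabla J$, all of which are $C^1$. The approach is the standard KKT/active-set argument. By strict feasibility, Slater's condition holds for the QP near $x$, so the KKT multipliers $u(y)$ exist. Moreover, the multiplier set is uniformly bounded near $x$, because strict feasibility gives a uniform bound of Gauvin type. The optimizer satisfies
\[
\xi=-\tfrac1L\nabla J(y)+\textstyle\sum_{i\in A}u_i\nabla h_i(y),
\]
where $A$ ranges over the finitely many possible active sets. For each $A$ and each linearly independent subset $B\subseteq A$, the multipliers supported on $B$ are obtained by solving a Gram system in $\{\nabla h_i(y)\}_{i\in B}$. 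The CRC guarantees that such bases remain linearly independent on a neighborhood, so the Gram matrix stays invertible with a continuous inverse. Each candidate map $y\mapsto\xi_{A,B}(y)$ is therefore $C^1$, hence locally Lipschitz. The true solution selects, at each $y$, one of these finitely many locally Lipschitz maps. Since the solution is also continuous (by Berge's maximum theorem together with strong convexity), a continuous selection of finitely many Lipschitz pieces is locally Lipschitz. This is the main obstacle: near a point where the active set changes, or where gradients become degenerate, one must still control the multiplier bases uniformly, and that is exactly what the MFCQ+CRC pair supplies, as in the Liu/Ralph–Dempe stability results for parametric programs. If needed, I would appeal directly to those results instead of redoing the selection argument.

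\emph{Forward invariance.} Local Lipschitz continuity gives existence and uniqueness of solutions from any $x_0\in\mathcal{C}$. Along a solution, each constraint obeys $\frac{d}{dt}h_i(x(t))=\nabla h_i^\intercal\mathcal{G}_\alpha\ge-\alpha h_i(x(t))$. The comparison lemma then yields $h_i(x(t))\ge e^{-\alpha t}h_i(x_0)\ge0$. A Nagumo-type argument covers the fact that the vector field is only defined on a neighborhood of $\mathcal{C}$: trajectories cannot leave $\mathcal{C}$ before the inequality would be violated. This establishes forward invariance of $\mathcal{C}$.
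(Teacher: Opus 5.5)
\textbf{Verdict: there is a genuine gap, in how the CRC enters the Lipschitz step.} Your strict-feasibility step and your comparison-lemma argument for forward invariance are fine. Your overall route is also the right one: treat \eqref{eq:sgf} as a parametric strongly convex QP, obtain MFCQ for the QP from a strictly feasible point, then use constant rank and a Liu/Ralph--Dempe stability result. The paper does not prove this lemma; it cites \cite{AA-JC:24-tac}. For its A-SGF analogue it argues in exactly this way, via MFCQ of the QP plus \cite[Theorem~3.6]{JL:95}.

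There are two problems with the CRC step.
\begin{itemize}
\item \emph{Direction.} Persistence of linear independence near $x$ is automatic and needs no CRC. What your selection argument needs is the converse: every subset $B$ that is linearly independent at some nearby $y$ must already be independent at $x$ (the rank cannot jump up). Only then are all pieces $\xi_{A,B}$ $C^1$ on one common neighborhood with a common Lipschitz constant.
\item \emph{Index set.} The relevant $B$ are subsets of the active set of the QP at its solution, $\{i : \nabla h_i(x)^\intercal \mathcal{G}_\alpha(x) = -\alpha h_i(x)\}$, not of $\mathcal{I}(x)$. A constraint with $h_i(x)>0$ can be active in \eqref{eq:sgf}. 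At interior points $\mathcal{I}(x)=\emptyset$, so the CRC as stated is vacuous there. Falling back on Liu's theorem does not help, because its constant-rank hypothesis is also on the active constraints of the parametric QP.
\end{itemize}

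With the CRC indexed by $\mathcal{I}(x)$, the Lipschitz claim itself fails. Take $n=k=2$, $\alpha=1$, $J(x)=\frac12\|x-(-2,0)\|^2$ (so $\mu=L=1$), $h_1(x)=1+x_1$ and $h_2(x)=1+x_1-\frac12\|x\|^2$.
\begin{itemize}
\item \emph{Hypotheses hold.} Since $h_1-h_2=\frac12\|x\|^2$, we have $h_1>0$ on $\mathcal{C}$, so $\mathcal{I}(x)\subseteq\{2\}$. Also $\nabla h_2\neq0$ on $\partial\mathcal{C}$. Hence MFCQ and the stated CRC hold on all of $\mathcal{C}$, and Assumption~\ref{assump:cvx} holds.
\item \emph{Rank jump at an interior point.} At $x=0$ both QP constraints read $\xi_1\ge-1$, and both are active at $\mathcal{G}_\alpha(0)=(-1,0)$. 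Moreover $\nabla h_1(0)=\nabla h_2(0)$, while the pair has rank $2$ at every $(t,\epsilon)$ with $\epsilon\neq0$. This is exactly the rank jump your argument must exclude.
\item \emph{Lipschitz failure.} Solving the KKT system at $x=(t,\epsilon)$ shows both constraints stay active, with nonnegative multipliers, for $t$ in a window of length about $\epsilon^2$, roughly $[-\frac32\epsilon^2,-\frac12\epsilon^2]$. There $\mathcal{G}_\alpha(x)=\bigl(-1-t,\,(t+\frac12t^2-\frac12\epsilon^2)/\epsilon\bigr)$. The second component moves by about $\epsilon$ across the window, so the difference quotient is of order $1/\epsilon$. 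Hence $\mathcal{G}_\alpha$ is not locally Lipschitz at $0$.
\end{itemize}

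The fix is to impose the constant-rank condition on subsets of the QP-active index set (or on all subsets of $[k]$). This is what the paper does for the A-SGF through $\mathcal{A}(x,v)$. With that hypothesis, and with the CRC used in the direction described above, your argument goes through.
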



To extend the SGF construction to the second-order
dynamics~\eqref{eq:nominal-accelerated-flow}, we recall the high-order
control barrier functions (HOCBFs)~\cite{WX-CB:21}. Consider the
control-affine system
\begin{equation}
\label{eq:control-affine-system}
    \dot{z}
    =
    f(z)+g(z)u,
\end{equation}
where $z\in\real^{n_z}$, $u\in\real^m$, $f:\real^{n_z}\to\real^{n_z}$,
and $g:\real^{n_z}\to\real^{n_z\times m}$ are locally Lipschitz. Let
$b:\real^{n_z} \to \real$ have \emph{relative degree} $r$
w.r.t.~\eqref{eq:control-affine-system}\footnote{ That is, $L_gL_f^j
b=0$ for $j=0,\ldots,r-2$ and $L_gL_f^{r-1}b\neq0$.}. Let $\alpha_j
\in \mathcal{K}^e$, $j \in [r]$. Starting from $\varphi_0(z):=b(z)$,
define $ \varphi_j := \dot{\varphi}_{j-1} + \alpha_j(\varphi_{j-1})$
and $\mathcal{C}_j := \setdef{z\in\mathcal{D}}
{\varphi_{j-1}(z)\geq0}$. 

\begin{theorem}[HOCBF and forward invariance~\cite{WX-CB:21}]
\label{thm:hocbf-invariance}
If there exist $\alpha_j\in\mathcal K^e$, $j\in[r]$ s.t. $ \sup_u
\varphi_r(z,u)\geq0 $ for $ z\in\bigcap_{j=1}^r\mathcal C_j$, then $b$
is a \emph{high-order control barrier function}~(HOCBF) for
\eqref{eq:control-affine-system}. Any locally Lipschitz
controller $u(z)$ satisfying $\varphi_r(z,u(z))\geq0$ on
$\cap_{j=1}^r\mathcal C_j$ renders $\cap_{j=1}^r\mathcal C_j$ forward
invariant.\oprocend
\end{theorem}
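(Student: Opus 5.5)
We argue by induction from the innermost constraint outward, using Nagumo's theorem (equivalently, a comparison lemma for scalar differential inequalities) at each stage. Fix a locally Lipschitz $u(z)$ with $\varphi_r(z,u(z))\ge 0$ on $\mathcal S:=\cap_{j=1}^r\mathcal C_j$. The closed-loop field $f(z)+g(z)u(z)$ is locally Lipschitz, so for any $z(0)\in\mathcal S$ there is a unique maximal solution on some $[0,T)$. Along this solution define the scalar signals $\psi_j(t):=\varphi_j(z(t))$ for $j=0,\dots,r-1$.

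The relative-degree assumption $L_gL_f^jb=0$ for $j\le r-2$ guarantees that each $\varphi_{j}$ with $j\le r-1$ depends only on $z$, not on $u$. It is also $C^1$ wherever it is used, assuming $b$ and the $\alpha_j$ are smooth enough; this regularity is implicit in the statement. Hence $\psi_j$ is differentiable, and by definition
\[
\dot\psi_{j-1}=\psi_j-\alpha_j(\psi_{j-1}),\qquad j\in[r],
\]
where $\psi_r(t):=\varphi_r(z(t),u(z(t)))$.

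The key step is the following scalar claim: if $\dot y\ge -\alpha(y)$ with $\alpha\in\mathcal K^e$ and $y(0)\ge 0$, then $y(t)\ge 0$ on $[0,T)$. We prove it by the comparison lemma against $\dot w=-\alpha(w)$, $w(0)=y(0)$. This ODE has $w\equiv 0$ as an equilibrium, so its solutions starting at nonnegative values stay nonnegative. If $\alpha$ is only continuous, uniqueness may fail, and we would instead argue directly. Suppose $y(t_1)<0$, and let $t_0:=\sup\{t<t_1:y(t)\ge 0\}$. On $(t_0,t_1]$ we have $y<0$, so $-\alpha(y)>0$, hence $\dot y>0$ and $y$ is increasing there. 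Combined with $y(t_0)=0$, this gives $y(t_1)>0$, a contradiction. This direct argument avoids any Lipschitz requirement on $\alpha$, so we use it.

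We now run the induction backward. The top level gives $\psi_r\ge 0$ as long as $z(t)\in\mathcal S$, which is the subtle point. Then $\dot\psi_{r-1}\ge -\alpha_r(\psi_{r-1})$ with $\psi_{r-1}(0)\ge 0$, so $\psi_{r-1}\ge 0$. Next, $\psi_{r-1}\ge 0$ gives $\dot\psi_{r-2}\ge-\alpha_{r-1}(\psi_{r-2})$, so $\psi_{r-2}\ge 0$, and so on down to $\psi_0=b\ge 0$.

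The main obstacle is the circularity: we need $z(t)\in\mathcal S$ to invoke $\varphi_r\ge0$, yet invariance of $\mathcal S$ is what we are proving. We resolve it with a first-exit-time argument. Let $\tau:=\sup\{t\in[0,T): z(s)\in\mathcal S\ \forall s\le t\}$, which is attained since $\mathcal S$ is closed, and suppose $\tau<T$. Then there are $t_n\downarrow\tau$ with $z(t_n)\notin\mathcal S$. Because $\mathcal S$ is a finite intersection, some single index $j$ has $\psi_{j-1}(t_n)<0$ along a subsequence. We take the largest such index and apply the scalar argument on a short interval past $\tau$. This requires $\varphi_r\ge0$ slightly outside $\mathcal S$, and we do not have that. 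The standard fix, following Xiao--Belta, is to assume the condition holds on an open neighborhood of $\mathcal S$, or to use the strict version with $\epsilon$-margins and let $\epsilon\to0$. Alternatively, we can apply Nagumo's theorem on the boundary cone of $\mathcal S$: at a boundary point with active set $\{j:\psi_{j-1}=0\}$, the largest active index $j$ satisfies $\dot\psi_{j-1}=\psi_j\ge0$, and $\psi_j>0$ whenever $j<r$ because that index is not active. This makes the field subtangential to $\mathcal S$.

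We would present the Nagumo/subtangential version, since local Lipschitzness of the closed loop gives uniqueness and Nagumo then yields invariance. Finally, we conclude $T=\infty$ relative to $\mathcal S$, or else accept forward invariance on the maximal interval.
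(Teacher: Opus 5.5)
The paper gives no proof of this statement. It imports it from Xiao--Belta, whose argument is your backward induction with the scalar comparison lemma, but with the hypothesis imposed along the closed-loop trajectory ($\varphi_r\ge0$ for all $t\ge t_0$), which avoids the circularity. Your scalar lemma and the cascade identity $\dot\psi_{j-1}=\psi_j-\alpha_j(\psi_{j-1})$ are correct. You are also right that, with the hypothesis stated only pointwise on your $\mathcal S=\cap_{j=1}^r\mathcal C_j$, the induction is circular.

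\textbf{The gap.} You do not actually resolve that circularity. Assuming the inequality on an open neighborhood, or with $\epsilon$-margins, changes the theorem. The Nagumo step asserts subtangentiality without justification, and it has two problems.
\begin{itemize}
\item Checking only the largest active index says nothing about the other active constraints.
\item Checking all of them gives $\nabla\varphi_{i-1}(z)^\intercal F(z)=\varphi_i\ge0$ for every active $i$. Equality holds whenever $i+1$ is also active, or $i=r$ and $\varphi_r=0$. This only places $F(z)=f(z)+g(z)u(z)$ in the \emph{linearized} cone. Nagumo needs the Bouligand tangent cone $T_{\mathcal S}(z)$, and the two coincide only under a constraint qualification.
\end{itemize}
Your argument never uses $L_gL_f^{r-1}b\neq0$, and it cannot be complete without it. Take $r=1$, $b(x)=-x^2$, $\dot x=u$, $u\equiv1$. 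Then $\varphi_1=-2xu+\alpha_1(-x^2)=0$ on $\mathcal S=\{0\}$, so your boundary check passes, yet $\{0\}$ is not invariant. Here $L_gb$ vanishes on the boundary, so the relative-degree hypothesis is exactly what excludes this.

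\textbf{The missing ingredient.} Relative degree $r$ supplies LICQ. Suppose that, on an open set containing $\mathcal S$, $L_gL_f^jb\equiv0$ for $j\le r-2$ and $L_{g_\ell}L_f^{r-1}b\neq0$ for some input column $g_\ell$, with $f,g$ smooth enough. Then $\nabla b,\nabla L_fb,\dots,\nabla L_f^{r-1}b$ are linearly independent. This is the normal-form lemma: the $r\times r$ matrix $[\nabla (L_f^kb)^\intercal\mathrm{ad}_f^{j}g_\ell]_{k,j}$ is anti-triangular with nonzero anti-diagonal.

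Each $\varphi_j$ is $L_f^jb$ plus a function of $b,\dots,L_f^{j-1}b$. So $\nabla\varphi_0,\dots,\nabla\varphi_{r-1}$ are a unit-triangular transform of these gradients and remain independent. Hence $T_{\mathcal S}(z)=\{d:\nabla\varphi_{i-1}(z)^\intercal d\ge0,\ i\text{ active}\}$. The all-index check then gives $F(z)\in T_{\mathcal S}(z)$ at every $z\in\mathcal S$, and Nagumo together with uniqueness yields invariance.

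In your exit-time language, the constraint qualification is needed in only one case. Runs of active indices ending at some $m<r$ are handled by continuity ($\psi_m(\tau)>0$) plus the scalar lemma, with no information outside $\mathcal S$. The same holds when $\psi_{r-1}(\tau)=0$ but $\psi_r(\tau)>0$. The only case that needs it is $\psi_{r-1}(\tau)=\psi_r(\tau)=0$.
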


\subsection{Problem Formulation}
\label{subsec:problem-formulation}
We can now state the objective of this paper.
\begin{problem}
\label{prob:asgf}
For the constrained optimization~\eqref{eq:origin-opt}, construct a
second-order system whose nominal acceleration is given
by~\eqref{eq:nominal-state-space} and whose constraints are enforced
through relative-degree-$2$ CBFs. Establish conditions under which the
resulting dynamics are well defined and locally Lipschitz, admit no
undesired equilibria in the feasible set, and retain the accelerated
convergence-rate dependence of the nominal flow.
\oprocend
\end{problem}

\section{Accelerated Safe Gradient Flow}
\label{sec:ASGF}

In this section, we construct an Accelerated Safe Gradient
Flow~(A-SGF) by filtering the nominal
acceleration~\eqref{eq:nominal-state-space} through
relative-degree-$2$ CBF constraints. After this, we analyze its
well-posedness and characterize its equilibria.

For each constraint $h_i$, choose $c_{i,1}, c_{i,2} > 0$ as associated
linear gains for the first- and second-order CBF, respectively.
Starting from $\varphi_{i,0}(x):=h_i(x)$, the first HOCBF function is
\begin{equation}
\label{eq:FOCBF}
    \varphi_{i,1}(x,v)
    :=
    \dot{\varphi}_{i,0}
    +
    c_{i,1}\varphi_{i,0}
    =
    \nabla h_i(x)^\intercal v
    +
    c_{i,1}h_i(x).
\end{equation}
Differentiating~\eqref{eq:FOCBF} along $\dot{x}=v$, $\dot{v}=a$ gives
the inequality
\begin{align}
    \varphi_{i,2}(x,v;a)
    &=
    \nabla h_i(x)^\intercal a
    +
    v^\intercal\nabla^2h_i(x)v
    + \label{eq:SOCBF-inequal}\\
    & \quad (c_{i,1}+c_{i,2})
        \nabla h_i(x)^\intercal v
    +
    c_{i,1}c_{i,2}h_i(x)
    \geq0 . \notag
\end{align}
Thus, we define the safe acceleration as the minimum-norm modification
of $\subscr{a}{nom}$ satisfying all the second-order CBFs:
\begin{align}
\subscr{a}{CBF}(x,v)
:=
\arg\min_{a\in\real^n}
\quad&
    \frac{1}{2}
    \Vert
        a-\subscr{a}{nom}(x,v)
    \Vert^2
    \label{eq:SOCBF-QP}
\\
\text{s.t.}\quad&
    \varphi_{i,2}(x,v;a)\geq0,
    \quad i\in[k].
    \notag
\end{align}

\begin{definition}[A-SGF]
\label{def:ASGF}
The dynamical system
\begin{equation}
\label{eq:A-SGF}
    \dot{x}=v,
    \qquad
    \dot{v}=\subscr{a}{CBF}(x,v),
\end{equation}
where $\subscr{a}{CBF}$ is defined by the
SOCBF-QP~\eqref{eq:SOCBF-QP}, is called the \emph{accelerated safe
gradient flow (A-SGF)}.
\oprocend
\end{definition}

We next establish conditions under which this SOCBF-QP is strictly
feasible and the A-SGF vector field is locally Lipschitz.

Given the first-order CBFs\eqref{eq:FOCBF}, define the lifted safe set
\begin{equation}
\label{eq:lifted-safe-set}
    \mathcal{S}
    :=\hspace*{-0.1cm}
      \setdef{(x,v)\in\real^{2n}\hspace*{-0.1cm}}
    {
    \hspace*{-0.1cm}    h_i(x)\geq0,\; \varphi_{i,1}(x,v)\geq0, i\in [k]
    }.
\end{equation}
For $(x,v)\in\mathcal{S}$, define the active first-order CBF index set
$
    \mathcal{I}_1(x,v)
    :=
    \setdef{i\in[k]}
    {\varphi_{i,1}(x,v)=0}
$.
We first characterize the strict feasibility of the SOCBF-QP.

\begin{proposition}\longthmtitle{Strict feasibility of the SOCBF-QP}
\label{prop:asgf-feasibility}
Fix $(x,v)\in\mathcal{S}$, and suppose that there exists a
$q\in\real^n$ such that
\begin{subequations}
\label{eq:feas-cond}
    \begin{align}
        &\dot{\varphi}_{i,1}(x,v;q)>0,
        && i\in\mathcal{I}_1(x,v),
        \label{eq:feas-cond-1}
        \\
        &c_{i,2}>
        \frac{
            [-\dot{\varphi}_{i,1}(x,v;q)]_+
        }{
            \varphi_{i,1}(x,v)
        },
        && i\notin\mathcal{I}_1(x,v).
        \label{eq:feas-cond-2}
    \end{align}
\end{subequations}
Then, the SOCBF-QP~\eqref{eq:SOCBF-QP} is strictly feasible at $(x,v)$
and admits a unique optimizer $\subscr{a}{CBF}(x,v)$.
\end{proposition}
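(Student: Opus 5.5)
The plan is to show that the vector $a = q$ itself strictly satisfies every constraint of the SOCBF-QP~\eqref{eq:SOCBF-QP}. Uniqueness then follows from strict convexity of the objective. The key observation is that the second-order CBF function~\eqref{eq:SOCBF-inequal} decomposes along the construction of the HOCBF. Here $\dot{\varphi}_{i,1}(x,v;a)$ is the derivative of~\eqref{eq:FOCBF} along $\dot x = v$, $\dot v = a$:
\begin{equation*}
\dot{\varphi}_{i,1}(x,v;a) = \nabla h_i(x)^\intercal a + v^\intercal \nabla^2 h_i(x) v + c_{i,1}\nabla h_i(x)^\intercal v .
\end{equation*}
Consequently,
\begin{equation*}
\varphi_{i,2}(x,v;a) = \dot{\varphi}_{i,1}(x,v;a) + c_{i,2}\,\varphi_{i,1}(x,v),
\end{equation*}
since $c_{i,2}\varphi_{i,1} = c_{i,2}\nabla h_i^\intercal v + c_{i,1}c_{i,2}h_i$. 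This identity is the first step and reduces everything to sign bookkeeping.

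Next, I would evaluate the identity at $a = q$ and split the indices according to $\mathcal{I}_1(x,v)$.
\begin{itemize}
\item For $i\in\mathcal{I}_1(x,v)$ we have $\varphi_{i,1}(x,v)=0$. Then $\varphi_{i,2}(x,v;q) = \dot{\varphi}_{i,1}(x,v;q) > 0$ by~\eqref{eq:feas-cond-1}.
\item For $i\notin\mathcal{I}_1(x,v)$, membership $(x,v)\in\mathcal{S}$ gives $\varphi_{i,1}(x,v) > 0$. Multiplying~\eqref{eq:feas-cond-2} by this positive quantity gives $c_{i,2}\varphi_{i,1}(x,v) > [-\dot{\varphi}_{i,1}(x,v;q)]_+ \geq -\dot{\varphi}_{i,1}(x,v;q)$. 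Hence $\varphi_{i,2}(x,v;q) > 0$.
\end{itemize}
Thus $q$ is a strictly feasible (Slater) point of the QP.

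Finally, each constraint $\varphi_{i,2}(x,v;a)\geq 0$ is affine in $a$, so the feasible set is a nonempty closed convex polyhedron. The objective $\tfrac12\|a-\subscr{a}{nom}(x,v)\|^2$ is continuous, coercive and strictly convex. A minimizer therefore exists, by coercivity on a nonempty closed set, and it is unique, by strict convexity on a convex set. This minimizer is the Euclidean projection of $\subscr{a}{nom}(x,v)$ onto the polyhedron.

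The only step needing care is the decomposition identity, which requires checking that the cross terms in~\eqref{eq:SOCBF-inequal} match. The division in~\eqref{eq:feas-cond-2} is legitimate because $\varphi_{i,1}>0$ off $\mathcal{I}_1$, so no real obstacle is expected.
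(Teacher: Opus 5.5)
Your proof is correct and follows the paper's argument essentially step for step: you split the indices by $\mathcal{I}_1(x,v)$, use $\varphi_{i,2}=\dot{\varphi}_{i,1}+c_{i,2}\varphi_{i,1}$ to show that $a=q$ is a Slater point, and get uniqueness from strict convexity. You are slightly more careful than the paper in checking the decomposition identity and in justifying existence of the minimizer by coercivity; the paper additionally notes, for later use in the Lipschitz theorem, that the strict inequalities persist in a neighborhood of $(x,v)$ by continuity.
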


\begin{proof}
  Firstly, consider $i\in\mathcal{I}_1(x,v)$. By definition,
  $\varphi_{i,1}(x,v)=0$, hence, $ \varphi_{i,2}(x,v;q) =
  \dot{\varphi}_{i,1}(x,v;q)>0, $ by~\eqref{eq:feas-cond-1}.  Now consider
  $i\notin\mathcal{I}_1(x,v)$. Since $(x,v)\in\mathcal{S}$,
  $\varphi_{i,1}(x,v)>0$. By~\eqref{eq:feas-cond-2},
$
    c_{i,2}\varphi_{i,1}(x,v)
    >
    [-\dot{\varphi}_{i,1}(x,v;q)]_+ \ge -\dot{\varphi}_{i,1}(x,v;q),
$
and therefore,
$
    \varphi_{i,2}(x,v;q)
    =
    \dot{\varphi}_{i,1}(x,v;q)
    +
    c_{i,2}\varphi_{i,1}(x,v)
    >0.
$
In this way, $\varphi_{i,2}(x,v;q)>0$, for all $i$, and Slater's
condition holds for the constraints in~\eqref{eq:SOCBF-QP}. By
continuity, the same strict inequalities hold in a sufficiently small
neighborhood of $(x,v)$. Besides, \eqref{eq:SOCBF-QP} is strictly
convex. It guarantees the uniqueness of the solution.
\end{proof}

By Proposition~\ref{prop:asgf-feasibility}, the optimizer
of~\eqref{eq:SOCBF-QP} is well-defined. We may therefore define the
active SOCBF-QP index set
$
    \mathcal{A}(x,v)
    :=
    \setdef{i\in[k]}
    {
        \varphi_{i,2}
        \bigl(x,v;\subscr{a}{CBF}(x,v)\bigr)=0
    }.
$
The next result characterizes the regularity needed to guarantee the
existence and uniqueness of solutions to the A-SGF dynamics. 

\begin{theorem}[Lipschitzness of A-SGF]
\label{thm:asgf-wellposed}
Let $(x,v)\in\mathcal{S}$ satisfy the conditions of
Proposition~\ref{prop:asgf-feasibility}. Suppose that the CRC holds at
$x$ for $\mathcal{A}(x,v)$. Then $\subscr{a}{CBF}$ is locally
Lipschitz at $(x,v)$.
\end{theorem}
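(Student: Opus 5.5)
We would write the SOCBF-QP~\eqref{eq:SOCBF-QP} as a parametric, strictly convex QP in $a$ with parameter $p=(x,v)$:
\begin{equation*}
\min_{a}\ \tfrac12\|a-\subscr{a}{nom}(p)\|^2 \quad \text{s.t.}\quad G(x)a \ge -w(p),
\end{equation*}
where the rows of $G(x)$ are $\nabla h_i(x)^\intercal$ and $w_i(p)=v^\intercal\nabla^2h_i(x)v+(c_{i,1}+c_{i,2})\nabla h_i(x)^\intercal v+c_{i,1}c_{i,2}h_i(x)$. Since $h_i\in C^3$ and $J\in C^2$, the data $\subscr{a}{nom}$, $G$, $w$ are $C^1$, hence locally Lipschitz, in $p$. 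The objective Hessian is the identity, so the problem is uniformly strongly convex. By Proposition~\ref{prop:asgf-feasibility}, the point $q$ makes every constraint strict at $p_0=(x,v)$, and by continuity also on a neighborhood $\mathcal{U}$ of $p_0$. Thus Slater/MFCQ holds uniformly there, and the minimizer $\subscr{a}{CBF}(p)$ is unique and exists on $\mathcal{U}$. The natural route is the same one used for the SGF Lipschitz lemma of~\cite{AA-JC:24-tac}, which rests on the parametric optimization results for strongly convex programs under MFCQ and CRC (Liu's/Robinson's theory). We would replicate that argument with $\xi$ replaced by $a$ and the constraint data now depending on $(x,v)$.

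The key steps are as follows. First, we show that the KKT multipliers are locally bounded. MFCQ at $p_0$, which holds since $q$ is a strictly feasible point, implies that the multiplier set is nonempty and uniformly bounded near $p_0$. Combined with continuity of $p\mapsto \subscr{a}{CBF}(p)$, which follows from strong convexity and Berge's maximum theorem, this gives upper semicontinuity of the active set. In particular, $\mathcal{A}(p)\subseteq\mathcal{A}(p_0)$ for $p$ near $p_0$, because inactive constraints at $p_0$ stay inactive by continuity. Second, for each subset $\mathcal{J}\subseteq\mathcal{A}(p_0)$, we consider the equality-constrained problem obtained by imposing $G_{\mathcal{J}}(x)a=-w_{\mathcal{J}}(p)$. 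Its minimizer is the projection
\begin{equation*}
a_{\mathcal{J}}(p)=\subscr{a}{nom}-G_{\mathcal{J}}^\intercal (G_{\mathcal{J}}G_{\mathcal{J}}^\intercal)^{\dagger}(G_{\mathcal{J}}\subscr{a}{nom}+w_{\mathcal{J}}).
\end{equation*}
The CRC at $x$ for $\mathcal{A}(p_0)$ says that $\rank G_{\mathcal{J}}(y)$ is constant near $x$. Under constant rank, the pseudoinverse $(G_{\mathcal{J}}G_{\mathcal{J}}^\intercal)^\dagger$ is $C^1$ in $y$. Hence each selection $a_{\mathcal{J}}$ is locally Lipschitz with a common constant on a neighborhood. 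Third, we note that near $p_0$ the optimizer satisfies $\subscr{a}{CBF}(p)=a_{\mathcal{J}}(p)$ for $\mathcal{J}$ equal to the current active set. This holds because the KKT point of the full QP also solves the equality-constrained QP on its active set, and the latter solution is unique by strong convexity. So $\subscr{a}{CBF}$ is a continuous selection of finitely many locally Lipschitz functions. A continuous piecewise-Lipschitz selection is Lipschitz with the maximal constant, which follows by splitting a segment into finitely many pieces, or by Hager's lemma. This gives local Lipschitzness at $(x,v)$.

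The main obstacle is the second step: the constraint gradients in the active set may be linearly dependent, so the usual LICQ-based implicit function argument fails. This is exactly why CRC is assumed. The point to verify carefully is that constant rank makes the projection onto $\{a: G_{\mathcal{J}}a=-w_{\mathcal{J}}\}$ Lipschitz even when the system is redundant. Consistency of the affine constraint near $p_0$ is ensured by MFCQ and strict feasibility. If the direct pseudoinverse argument becomes messy, we would instead invoke the Lipschitz-stability theorem for parametric programs under MFCQ plus CRC (Liu 1995), as done in~\cite{AA-JC:24-tac}. The hypotheses there — $C^1$ data in $(a,p)$, strong second-order sufficiency (trivial here since the objective is $\tfrac12\|\cdot\|^2$), MFCQ, and CRC — have all been checked above.
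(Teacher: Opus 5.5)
Your proof is correct, but your primary route differs from the paper's; the paper's proof is exactly your fallback. The paper verifies MFCQ for the SOCBF-QP at $a^\star=\subscr{a}{CBF}(x,v)$ by taking $d:=q-a^\star$. Affinity in $a$ then gives $\nabla h_i(x)^\intercal d=\varphi_{i,2}(x,v;q)-\varphi_{i,2}(x,v;a^\star)>0$ for $i\in\mathcal{A}(x,v)$. It notes that $\nabla_a\varphi_{i,2}=\nabla h_i(x)$, so the CRC of the QP is literally the CRC at $x$, and then cites \cite[Theorem~3.6]{JL:95}.

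Your main argument instead reproves that theorem in this QP special case:
\begin{itemize}
\item Slater plus strong convexity give continuity of the minimizer, hence $\mathcal{A}(p)\subseteq\mathcal{A}(p_0)$ nearby.
\item On each $\mathcal{J}\subseteq\mathcal{A}(p_0)$ the candidate solution is the explicit projection $a_{\mathcal{J}}$. It is $C^1$ precisely because constant rank makes $(G_{\mathcal{J}}G_{\mathcal{J}}^\intercal)^\dagger$ smooth.
\item A continuous selection of finitely many Lipschitz maps is Lipschitz.
\end{itemize}
This buys transparency and extra information. MFCQ is seen to serve only existence and continuity, and CRC only the smoothness of the pseudoinverse. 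The map $\subscr{a}{CBF}$ comes out piecewise $C^1$, with a Lipschitz constant given by the worst piece, and nothing rests on matching the hypotheses of an external theorem. The paper's route is much shorter and does not depend on the identity Hessian or the explicit projection formula.

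Three small remarks on your version:
\begin{itemize}
\item Consistency of $G_{\mathcal{J}}a=-w_{\mathcal{J}}$ does not follow from MFCQ for arbitrary $\mathcal{J}$ and $p$, since redundant rows can have mismatched right-hand sides away from $p_0$. You do not need it, though. The pseudoinverse formula defines a Lipschitz map on the whole neighborhood regardless. It coincides with $\subscr{a}{CBF}(p)$ when $\mathcal{J}=\mathcal{A}(p)$, because there $\subscr{a}{CBF}(p)$ is itself a solution of the equality system.
\item The multiplier-boundedness step is never used in your direct route and can be dropped.
\item The coincidence sets $\{t: a_{\mathcal{J}}=\subscr{a}{CBF}\}$ along a segment need not be intervals. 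The selection lemma therefore needs the Hager-type argument you mention, not a naive finite partition of the segment.
\end{itemize}
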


\begin{proof}
  Let $a^\star:=\subscr{a}{CBF}(x,v)$. By
  Proposition~\ref{prop:asgf-feasibility}, there exists a strictly
  feasible direction $q$ s.t.~
  $\varphi_{i,2}(x,v;q)>0$, for all $ i \in [k]$. For each
  $i\in\mathcal{A}(x,v)$ we have $\varphi_{i,2}(x,v;a^\star)=0$. Since
  the constraints in~\eqref{eq:SOCBF-QP} are affine in $a$, for
  $d:=q-a^\star$, 
  $
    \nabla h_i(x)^\intercal d
    = \varphi_{i,2}(x,v;q)
    - \varphi_{i,2}(x,v;a^\star)>0$ for $ i \in \mathcal{A}(x,v) 
  $. Hence, the MFCQ holds for the SOCBF-QP at $a^\star$.

Moreover, $\nabla_a\varphi_{i,2}(x,v;a) = \nabla h_i(x)$, so the CRC
for the SOCBF-QP~\eqref{eq:SOCBF-QP} is precisely the CRC stated in
this theorem w.r.t the index set $\mathcal{A}(x,v)$.
By~\cite[Theorem~3.6]{JL:95}, the MFCQ and the CRC together with
Assumption~\ref{assump:cvx} imply that the $\subscr{a}{CBF}(x,v)$ is
locally Lipschitz at $(x,v)$. 
\end{proof}

The next more conservative condition depends only on the geometry of
the original constraints and is easier to check.

\begin{corollary}
\label{cor:asgf-conservative}
Suppose that, for every $x\in\mathcal{C}$, there exists
$\widetilde{q}\in\real^n$ s.t. $ \nabla
h_i(x)^\intercal\widetilde{q}>0 $, for all $i\in [k]$. Assume that the
CRC holds for all $x \in \mathcal{C}$ in~\eqref{eq:origin-opt}.
Then, for any positive CBF gains $c_{i,1}$ and $c_{i,2}$, the SOCBF-QP~\eqref{eq:SOCBF-QP}
is strictly feasible for every $(x,v)\in\mathcal{S}$, and
$\subscr{a}{CBF}$ is locally Lipschitz on $\mathcal{S}$.
\end{corollary}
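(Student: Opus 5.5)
The plan is to reduce the corollary to Proposition~\ref{prop:asgf-feasibility} and Theorem~\ref{thm:asgf-wellposed}. The key idea is that a uniform "MFCQ-type" direction $\widetilde q$ for all constraints lets us build an acceleration $q$ that makes $\varphi_{i,2}(x,v;q)$ positive for \emph{every} $i$, whatever the gains. Once that holds, the conditions~\eqref{eq:feas-cond} are no longer needed.

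Fix $(x,v)\in\mathcal{S}$, so $x\in\mathcal{C}$, and let $\widetilde q$ satisfy $\nabla h_i(x)^\intercal\widetilde q>0$ for all $i\in[k]$. Take $q:=s\widetilde q$ with $s>0$ a scalar. Since $\varphi_{i,2}$ is affine in $a$ with coefficient $\nabla h_i(x)$, we have
\begin{equation*}
\varphi_{i,2}(x,v;s\widetilde q)= s\,\nabla h_i(x)^\intercal\widetilde q + r_i(x,v),
\end{equation*}
where $r_i(x,v):=v^\intercal\nabla^2h_i(x)v+(c_{i,1}+c_{i,2})\nabla h_i(x)^\intercal v+c_{i,1}c_{i,2}h_i(x)$ is a finite number independent of $s$. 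Choosing
\begin{equation*}
s>\max_{i\in[k]}\frac{[-r_i(x,v)]_+}{\nabla h_i(x)^\intercal\widetilde q}
\end{equation*}
gives $\varphi_{i,2}(x,v;q)>0$ for all $i\in[k]$. This is Slater's condition for the SOCBF-QP~\eqref{eq:SOCBF-QP}.

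From here the argument follows the proof of Proposition~\ref{prop:asgf-feasibility}. The strictly convex objective over a nonempty closed convex polyhedron yields a unique optimizer. Equivalently, one can check~\eqref{eq:feas-cond} directly for this $q$. For $i\in\mathcal{I}_1$, $\dot\varphi_{i,1}=\varphi_{i,2}>0$. For $i\notin\mathcal{I}_1$, enlarging $s$ makes $\dot\varphi_{i,1}(x,v;q)>0$, so the bracket $[-\dot\varphi_{i,1}]_+$ vanishes and~\eqref{eq:feas-cond-2} holds for any $c_{i,2}>0$. This is why arbitrary positive gains are allowed.

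For Lipschitzness we apply Theorem~\ref{thm:asgf-wellposed} at each $(x,v)\in\mathcal{S}$. Its feasibility hypotheses were just verified. The CRC at $x$ for $\mathcal{A}(x,v)$ is the step needing care, since the assumed CRC concerns subsets of $\mathcal{I}(x)$, while $\mathcal{A}(x,v)$ may contain indices with $h_i(x)>0$. I would address this by reading the corollary's CRC assumption as holding for all index subsets $\mathcal{J}\subseteq[k]$ at points of $\mathcal{C}$, which is what the QP-level CRC in Theorem~\ref{thm:asgf-wellposed} requires. Under that reading, the MFCQ argument in that theorem (using $d=q-a^\star$) together with~\cite[Theorem~3.6]{JL:95} gives local Lipschitz continuity of $\subscr{a}{CBF}$ at every point of $\mathcal{S}$, hence on $\mathcal{S}$. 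The main obstacle is this index-set mismatch. If the narrower reading (subsets of $\mathcal{I}(x)$ only) were intended, I would instead show that the constant-rank property near $x$ extends to the relevant active sets via continuity of $\nabla h_i$. I expect this to require the broader hypothesis in general.
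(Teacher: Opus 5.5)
Your strict-feasibility argument is the paper's. The paper scales $\widetilde q$ by a $\beta>0$ large enough that $\dot\varphi_{i,1}(x,v;\beta\widetilde q)>0$ for every $i\in[k]$ (your ``equivalently'' step), so \eqref{eq:feas-cond} holds for any $c_{i,2}>0$. Checking Slater's condition on $\varphi_{i,2}$ directly, as you do first, is equally valid; rename the scalar, though, since $s=\sqrt{\mu/L}$ later in the paper. For Lipschitzness the paper also invokes Theorem~\ref{thm:asgf-wellposed}. It asserts in one line that, because $\nabla_a\varphi_{i,2}=\nabla h_i(x)$, the CRC of \eqref{eq:origin-opt} on $\mathcal{C}$ yields the CRC for the SOCBF-QP. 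It never addresses the mismatch between $\mathcal{I}(x)$ and $\mathcal{A}(x,v)$ that you identified.

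That mismatch is a genuine gap in the paper's proof, and the broader hypothesis is necessary, not just convenient. Drop your fallback based on continuity of $\nabla h_i$; it cannot work. Take $h_1=1-x_2$, $h_2=2-x_2-x_1^2$, $J=\tfrac12x_1^2+\tfrac12(x_2-T)^2$ (so $\mu=L=1$), uniform gains $c_1,c_2>0$, and $T>c_1c_2$. Then $\widetilde q=(0,-1)$ works at every $x$. The only points of $\mathcal{C}$ with both constraints active are $(\pm1,1)$, where the gradients are independent, so the footnote's CRC holds on all of $\mathcal{C}$.

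At $x=(\epsilon,0)$, $v=(w,0)$ the SOCBF constraints read $a_2\le c_1c_2$ and $a_2+2\epsilon a_1\le 2c_1c_2-2w^2-2(c_1+c_2)\epsilon w-c_1c_2\epsilon^2$, with $\subscr{a}{nom}=(\alpha,T)$, $\alpha=-2w-\epsilon$. At $\epsilon=0$, $w=w_0:=\sqrt{c_1c_2/2}$ (an interior point of $\mathcal{S}$), the two constraints coincide and are both active. So $\mathcal{A}=\{1,2\}$ while $\mathcal{I}(x)=\emptyset$.

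For small $\epsilon>0$, the QP solution is the vertex of the feasible wedge whenever the vertex's $a_1$-coordinate lies in $[\alpha-2\epsilon(T-c_1c_2),\alpha]$. That coordinate varies with $w$ at slope about $-2w_0/\epsilon$. A change of order $\epsilon^2$ in $w$ therefore moves $\subscr{a}{CBF}$ by order $\epsilon$, so $\subscr{a}{CBF}$ is not locally Lipschitz at $((0,0),(w_0,0))$.

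The corollary thus really needs the CRC at each $x\in\mathcal{C}$ for all $\mathcal{J}\subseteq[k]$, or directly at $x$ for $\mathcal{A}(x,v)$. State that hypothesis explicitly; with it your argument is complete, and more careful than the paper's.
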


\begin{proof}
  Assume that $\widetilde{q}$ satisfies the first stated condition.
  Fix any $(x,v) \in \mathcal{S}$, and take any $c_{i,1}, c_{i,2}>0$.
  We can choose $q = \beta\widetilde{q}$ to satisfy
  $\dot{\varphi}_{i,1}(x,v;q) > 0$, for all $i\in[k]$. In fact, any
  finite scaling parameter $\beta > 0$ will make~\eqref{eq:feas-cond}
  hold with
  \begin{equation*}
    \displaystyle
    \beta > \max_{i \in [k]} 
    \frac{[-v^\intercal \nabla^2 h_i(x) v 
    - c_{i,1} \nabla h_i(x)^\intercal v]_+}
    {\nabla h_i(x) \widetilde{q}} ,
  \end{equation*}
  Besides, since the gradient of~\eqref{eq:SOCBF-inequal} w.r.t.~$a$
  is $\nabla h_i(x)$, the CRC for the original
  optimization~\eqref{eq:origin-opt} for all $x\in \mathcal{C}$
  implies that the CRC holds for SOCBF-QP~\eqref{eq:SOCBF-QP}.
  By Theorem~\ref{thm:asgf-wellposed}, $\subscr{a}{CBF}$ is locally
  Lipschitz for all $(x,v) \in \mathcal{S}$.
\end{proof}
\begin{remark}
  Assumption~\ref{assump:cvx} implies the MFCQ holds on $\mathcal{C}$,
  which implies the first condition of Corollary~\ref{cor:asgf-conservative}.
  However, this does not guarantee the CRC.  \oprocend
\end{remark}
Finally, the lifted set~\eqref{eq:lifted-safe-set} provides the safety
of A-SGF.

\begin{corollary}[Safety of A-SGF]
\label{cor:asgf-safety}
Suppose that $\subscr{a}{CBF}$ is well defined and locally Lipschitz
on $\mathcal{S}$, and let $(x(0),v(0))\in\mathcal{S}$. Then
$\mathcal{S}$ is forward invariant along the A-SGF~\eqref{eq:A-SGF}.
\end{corollary}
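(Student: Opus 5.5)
The plan is to recast the A-SGF~\eqref{eq:A-SGF} as the closed loop of a control-affine system with a locally Lipschitz feedback, and then invoke Theorem~\ref{thm:hocbf-invariance} once for each constraint. Set $z=(x,v)\in\real^{2n}$, $f(z)=(v,0)$, $g(z)=(0,I_n)$ and $u=a$. Then $b_i(z):=h_i(x)$ satisfies $L_g b_i=0$ and $L_gL_f b_i=\nabla h_i(x)^\intercal$. So $b_i$ has relative degree $2$ wherever $\nabla h_i(x)\neq0$. With the linear class-$\mathcal K^e$ functions $\alpha_{i,1}(s)=c_{i,1}s$ and $\alpha_{i,2}(s)=c_{i,2}s$, the HOCBF construction gives exactly $\varphi_{i,1}$ in~\eqref{eq:FOCBF} and $\varphi_{i,2}$ in~\eqref{eq:SOCBF-inequal}. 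Moreover, $\bigcap_{j=1}^2\mathcal C_{i,j}=\{h_i\geq0,\ \varphi_{i,1}\geq0\}$, and intersecting over $i\in[k]$ yields $\mathcal S$.

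Next, take the feedback $u(z)=\subscr{a}{CBF}(x,v)$. By hypothesis it is well defined and locally Lipschitz on $\mathcal S$, and it is feasible for the SOCBF-QP~\eqref{eq:SOCBF-QP}. Therefore $\varphi_{i,2}(z,u(z))\geq0$ for all $i$ and all $z\in\mathcal S$. This also shows that $\sup_u\varphi_{i,2}\geq0$ on $\mathcal S$, so each $h_i$ is an HOCBF in the sense of Theorem~\ref{thm:hocbf-invariance}, relative to the set $\mathcal S$.

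Rather than relying on the per-constraint statement of Theorem~\ref{thm:hocbf-invariance}, which is phrased on that constraint's own set, I would give the direct comparison argument on the intersection. This is the step needing care, because the feedback is only guaranteed on $\mathcal S$ and not on each individual $\{h_i\geq0,\varphi_{i,1}\geq0\}$. Along a solution $(x(t),v(t))$ that stays in $\mathcal S$, we have
\[
\dot\varphi_{i,1}=\varphi_{i,2}-c_{i,2}\varphi_{i,1}\geq -c_{i,2}\varphi_{i,1},
\]
so $\varphi_{i,1}(t)\geq e^{-c_{i,2}t}\varphi_{i,1}(0)\geq0$. Then $\dot h_i=\varphi_{i,1}-c_{i,1}h_i\geq -c_{i,1}h_i$, which gives $h_i(t)\geq0$.

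To remove the circularity (the solution must stay in $\mathcal S$ for the bound to apply), I would extend $\subscr{a}{CBF}$ to a locally Lipschitz map on a neighborhood of $\mathcal S$. Proposition~\ref{prop:asgf-feasibility} shows that strict feasibility persists near each point, and the Lipschitz argument of Theorem~\ref{thm:asgf-wellposed} applies there as well. On this extended domain a unique solution exists by Picard–Lindelöf. One then checks via Nagumo's theorem (or the comparison lemma applied on the maximal interval where $\varphi_{i,1},h_i\geq0$) that the exit time from $\mathcal S$ cannot be finite.

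The remaining points are routine. Existence on $[0,\infty)$ would follow if solutions do not blow up; otherwise the claim holds on the maximal interval of existence, which is the standard meaning of forward invariance. I expect the main obstacle to be the technical extension step: handling the relative-degree requirement when $\nabla h_i=0$, and ensuring the feedback is defined slightly outside $\mathcal S$ so that the comparison argument is not circular.
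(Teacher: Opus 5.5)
Your proof is correct and uses the same mechanism as the paper. The difference is that the paper invokes Theorem~\ref{thm:hocbf-invariance} in one line, whereas you unpack it into the explicit comparison chain $\dot\varphi_{i,1}\ge -c_{i,2}\varphi_{i,1}$, $\dot h_i\ge -c_{i,1}h_i$ on the intersection $\mathcal S$.

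Your route buys rigor at two points the citation glosses over. First, Theorem~\ref{thm:hocbf-invariance} is stated for a single barrier on its own set, while $\subscr{a}{CBF}$ is only guaranteed to satisfy $\varphi_{i,2}\ge 0$ on $\mathcal S$. Second, the comparison argument is circular unless the differential inequalities hold slightly beyond $\mathcal S$.

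To close the second point cleanly, take the extension to be the QP solution itself on an open set $\mathcal N\supset\mathcal S$, so that $\varphi_{i,2}(z,\subscr{a}{CBF}(z))\ge 0$ on all of $\mathcal N$. You get $\mathcal N$ from two facts: strict feasibility persists near each point, as noted in the proof of Proposition~\ref{prop:asgf-feasibility}; and the Lipschitz conclusion of Theorem~\ref{thm:asgf-wellposed} (via Liu's theorem) already holds on a full neighborhood of each point of $\mathcal S$. Do not try to re-apply Proposition~\ref{prop:asgf-feasibility} or Theorem~\ref{thm:asgf-wellposed} at points off $\mathcal S$, where $\varphi_{i,1}$ may be negative and those hypotheses are unavailable.

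Then run the comparison on the maximal interval the trajectory spends in $\mathcal N$; a comparison on the interval spent in $\mathcal S$ alone does not exclude exit. It shows the trajectory lies in $\mathcal S$ on that interval. Since $\mathcal S$ is closed and $\mathcal N$ is open, that interval is the whole interval of existence.

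Finally, your concern about relative degree when $\nabla h_i=0$ is moot for this direct argument. It never uses $L_gL_f h_i\neq 0$, only the inequality $\varphi_{i,2}\ge 0$ along the closed loop.
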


\begin{proof}
  The forward invariance of $\mathcal{S}$ can immediately be shown by
  Theorem~\ref{thm:hocbf-invariance}. According
  to~\eqref{eq:lifted-safe-set}, $h_i(x) \ge 0$, for every $i \in
  [k]$, hence, $x(t) \in \mathcal{C}$ for $t \ge 0$.
\end{proof}

CBF-based safety filters are known to potentially introduce undesired
equilibria. Thus, establishing that the A-SGF introduces no additional
equilibria is essential, particularly when the constrained optimizer
lies on the boundary of $\mathcal{C}$. 

\begin{theorem}
  \longthmtitle{Equilibrium characterization}
\label{thm:equilibrium-characterization}
Under Assumption~\ref{assump:cvx}, suppose that the
A-SGF~\eqref{eq:A-SGF} is well defined on $\mathcal{S}$. Then the
equilibrium set of~\eqref{eq:A-SGF} restricted to
$\mathcal{S}$ is a singleton $\{[x^\star, 0]^\intercal\}$, where
$x^\star$ is the unique optimizer of problem~\eqref{eq:origin-opt}.
Therefore, the A-SGF admits no undesired equilibria in
$\mathcal{S}$.
\end{theorem}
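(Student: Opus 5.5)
An equilibrium of \eqref{eq:A-SGF} satisfies $v=0$ and $\subscr{a}{CBF}(x,0)=0$. The plan is to show two things: any such point in $\mathcal{S}$ has $x=x^\star$, and conversely $(x^\star,0)$ is an equilibrium. Both directions go through the KKT conditions of the SOCBF-QP~\eqref{eq:SOCBF-QP}. These conditions are necessary and sufficient because the QP is strictly convex with affine constraints, and it is strictly feasible by well-posedness on $\mathcal{S}$.

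\emph{Necessity.} At $v=0$ we have $\subscr{a}{nom}(x,0)=-\tfrac1L\nabla J(x)$. The second-order constraint \eqref{eq:SOCBF-inequal} reduces to
$$\nabla h_i(x)^\intercal a + c_{i,1}c_{i,2}h_i(x)\ge 0.$$
Stationarity of the QP at the optimizer $a^\star=0$ gives multipliers $\eta_i\ge0$ such that
$$0-\subscr{a}{nom}(x,0)=\sum_i \eta_i\nabla h_i(x),\qquad\text{that is,}\qquad \tfrac1L\nabla J(x)=\sum_i\eta_i\nabla h_i(x).$$
Complementary slackness at $a^\star=0$ reads $\eta_i\,c_{i,1}c_{i,2}h_i(x)=0$. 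Since the gains are positive, $\eta_i h_i(x)=0$. Because $(x,0)\in\mathcal{S}$, we also have $h_i(x)\ge0$. Setting $\lambda_i:=L\eta_i\ge 0$, the pair $(x,\lambda)$ satisfies the KKT system of \eqref{eq:origin-opt}. Under Assumption~\ref{assump:cvx}, the KKT conditions are sufficient for global optimality, and $x^\star$ is unique by strong convexity. Hence $x=x^\star$.

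\emph{Sufficiency.} Take $x=x^\star$, $v=0$. The point $(x^\star,0)$ lies in $\mathcal{S}$, since $h_i(x^\star)\ge0$ and $\varphi_{i,1}(x^\star,0)=c_{i,1}h_i(x^\star)\ge0$. Candidate $a=0$ is feasible for the QP, because each constraint becomes $c_{i,1}c_{i,2}h_i(x^\star)\ge 0$. Choose $\eta_i=\lambda_i^\star/L$. Then stationarity follows from the KKT identity $\nabla J(x^\star)=\sum_i\lambda_i^\star\nabla h_i(x^\star)$, and complementary slackness follows from $\lambda_i^\star h_i(x^\star)=0$. By sufficiency of KKT for the convex QP, $a=0$ is its optimizer, and by uniqueness (Proposition~\ref{prop:asgf-feasibility}) $\subscr{a}{CBF}(x^\star,0)=0$.

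The main point needing care is the necessity direction. One must check that KKT conditions are necessary for the QP, which holds since the constraints are affine in $a$, so no extra qualification is needed. One must also check that the QP slackness translates exactly into the original complementarity, which relies on $c_{i,1}c_{i,2}>0$ and $v=0$ eliminating the curvature and velocity terms. Everything else is bookkeeping with the multiplier rescaling $\lambda=L\eta$.
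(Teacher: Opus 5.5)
Your proof is correct, and its necessity direction is the paper's argument: reduce the SOCBF-QP at $v=0$, write its KKT conditions at $a^\star=0$, cancel the positive factor $c_{i,1}c_{i,2}$ in complementary slackness, rescale the multipliers by $L$, and invoke uniqueness of the KKT point of \eqref{eq:origin-opt}. Your sufficiency direction (checking $(x^\star,0)\in\mathcal{S}$ and certifying $\subscr{a}{CBF}(x^\star,0)=0$ via the multiplier $\lambda^\star/L$) is absent from the paper's proof but is needed for the singleton claim, so your version is the more complete one.
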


\begin{proof}
  Let $(\bar{x},\bar{v})\in\mathcal{S}$ be an equilibrium of
  \eqref{eq:A-SGF}. By definition,
  $\bar{v} = \subscr{a}{CBF}(\bar{x},0)=0$, and
  $\subscr{a}{nom}(\bar{x},0) = -1/L \nabla J(\bar{x})$. The
  SOCBF-QP~\eqref{eq:SOCBF-QP} evaluated at $(\bar{x},0)$ is
\begin{align*}
    \min_{a\in\real^n}\quad&
    \frac{1}{2}
    \Big\|
        a+\frac{1}{L}\nabla J(\bar{x})
    \Big\|^2
    \\
    \text{s.t.}\quad&
    \nabla h_i(\bar{x})^\intercal a
    +
    c_{i,1}c_{i,2}h_i(\bar{x})
    \geq0,
    \qquad i\in[k],
\end{align*}
with unique optimizer $a^\star=\subscr{a}{CBF}(\bar{x},0)=0$. By
strict feasibility, the KKT conditions hold. Thus, there exist
multipliers $\eta_i\geq0$, $i\in[k]$, such that
\begin{equation*}
        \frac{1}{L}\nabla J(\bar{x})
        - \sum_{i=1}^k \eta_i\nabla h_i(\bar{x})
        = 0, 
        \; \;
        \eta_i
        \big( c_{i,1}c_{i,2}h_i(\bar{x}) \big)
        = 0.
\end{equation*}
With $c_{i,1}, c_{i,2} >0$ it follows $\eta_i h_i(\bar{x})=0$ for each
$i$. Define $\bar{\lambda}_i:=L\eta_i \ge 0$, above equality becomes
\begin{equation} \label{eq:SOCBF-eq-2}
    \nabla J(\bar{x})
    - \sum_{i=1}^k \bar{\lambda}_i\nabla h_i(\bar{x})
    = 0,
    \qquad \bar{\lambda}_i h_i(\bar{x})=0.
\end{equation}
Together with $h(\bar{x}) \ge 0$, \eqref{eq:SOCBF-eq-2} coincides with
the KKT conditions of original problem~\eqref{eq:origin-opt}.  Under
Assumption~\ref{assump:cvx}, the optimizer of~\eqref{eq:origin-opt} 
is unique. Hence, $\bar{x} = x^\star$.
\end{proof}

\section{Acceleration-Preserving Design}
\label{sec:design-CBF}

In this section, we characterize how the CBF safety correction affects
the accelerated convergence mechanism and provide sufficient
conditions on the HOCBF gains under which the A-SGF retains the
nominal accelerated rate. Throughout this section, define
$s:=\sqrt{\mu/L}=1/\sqrt{\kappa},$ and use the shorthands vectors
$h(x) = [h_i(x)]_{i = 1}^k$, $\varphi_1(x,v) = [\varphi_{i,1}(x,v)]_{i
= 1}^k$ and $\varphi_2(x,v;a) = [\varphi_{i,2}(x,v;a)]_{i = 1}^k$.

\subsection{Variational Interpretation and First-Order CBF Gain}
\label{subsec:variational-acceleration}

Following the variational interpretation of accelerated
optimization~\cite{AW-ACW-MIJ:16}, consider the Euclidean Lagrangian
\begin{equation*}
    \mathcal{L}(x,v,t)
    :=
    e^{2st}
    \Big[
        \frac{1}{2}\|v\|^2
        -
        \frac{1}{L}J(x)
    \Big].
\end{equation*}
The corresponding unforced Euler--Lagrange equations are
\begin{equation*}
    \frac{\diff}{\diff t}
    \Big(
        \frac{\partial\mathcal{L}}{\partial v}
    \Big)
    -
    \frac{\partial\mathcal{L}}{\partial x}
    =
    e^{2st}
    \Big(
        \ddot{x}
        +
        2s\dot{x}
        +
        \frac{1}{L}\nabla J(x)
    \Big)
    =
    0,
\end{equation*}
which recover the nominal 
flow~\eqref{eq:nominal-accelerated-flow}. Given the optimizer
$x^\star$, 
\begin{equation}
\label{eq:accelerated-Lyapunov}
    W(x,v)
    :=
    \frac{1}{L}
    (
        J(x)-J(x^\star)
    )
    +
    \frac{1}{2}
    \|
        v+s(x-x^\star)
    \|^2 .
\end{equation}
Since the A-SGF remains feasible and $x^\star$ minimizes $J$ over
$\mathcal C$, $W$ is nonnegative along every feasible trajectory. In
\cite{AW-ACW-MIJ:16}, $W$ serves as the Lyapunov function for the
unconstrained \texttt{NAG-SC}. The analysis of the variation of $W$
along the A-SGF will reveal a simple design for the first-order CBF
gains.

\begin{lemma}\longthmtitle{Time derivative of $W$}
\label{lem:accelerated-dissipation}
Consider the dynamics
$
    \dot{x}=v;
$ $
    \dot{v}
    =
    \subscr{a}{nom}(x,v)+q.
$
Then
\begin{equation*}
    \dot{W}
    \leq
    -sW
    -
    \frac{s}{2}\|v\|^2
    +
    [
        v+s(x-x^\star)
    ]^\intercal q .
\end{equation*}
\end{lemma}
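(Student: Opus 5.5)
The plan is a direct computation of $\dot W$ along $\dot x=v$, $\dot v=\subscr{a}{nom}(x,v)+q$, followed by a single use of $\mu$-strong convexity of $J$; no property of the CBF filter is needed, since $q$ is arbitrary. Throughout, write $w:=v+s(x-x^\star)$ and recall that $s^2=\mu/L$.

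\textbf{Step 1: differentiate $W$.} By the chain rule,
\begin{align*}
\dot W &= \tfrac{1}{L}\nabla J(x)^\intercal v + w^\intercal(\dot v + s v)\\
&= \tfrac{1}{L}\nabla J(x)^\intercal v + w^\intercal\bigl(-s v - \tfrac{1}{L}\nabla J(x) + q\bigr),
\end{align*}
using $\subscr{a}{nom}(x,v)=-2s v-\tfrac1L\nabla J(x)$. Substituting $w$, the cross terms $\tfrac1L\nabla J(x)^\intercal v$ cancel, and we obtain
\begin{equation*}
\dot W = -\tfrac{s}{L}\nabla J(x)^\intercal(x-x^\star) - s\|v\|^2 - s^2 v^\intercal(x-x^\star) + w^\intercal q .
\end{equation*}

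\textbf{Step 2: expand the target.} Expanding $\|w\|^2$ in the definition of $W$ gives
\begin{align*}
-sW-\tfrac{s}{2}\|v\|^2 &= -\tfrac{s}{L}(J(x)-J(x^\star)) - s\|v\|^2\\
&\quad - s^2 v^\intercal(x-x^\star) - \tfrac{s^3}{2}\|x-x^\star\|^2 .
\end{align*}
Comparing with Step 1, the terms $-s\|v\|^2$ and $-s^2v^\intercal(x-x^\star)$ match exactly, and $w^\intercal q$ is carried through unchanged. The claimed inequality therefore reduces to
\begin{equation*}
\tfrac{s}{L}\nabla J(x)^\intercal(x-x^\star) \geq \tfrac{s}{L}\bigl(J(x)-J(x^\star)\bigr) + \tfrac{s^3}{2}\|x-x^\star\|^2 .
\end{equation*}
Multiplying by $L/s>0$ and using $s^2 L=\mu$, this is equivalent to
\begin{equation*}
\nabla J(x)^\intercal(x-x^\star) \geq J(x)-J(x^\star)+\tfrac{\mu}{2}\|x-x^\star\|^2 .
\end{equation*}

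\textbf{Step 3: apply strong convexity.} The last inequality is exactly the $\mu$-strong convexity inequality of Assumption~\ref{assump:cvx},
\begin{equation*}
J(x^\star)\geq J(x)+\nabla J(x)^\intercal(x^\star-x)+\tfrac{\mu}{2}\|x-x^\star\|^2 ,
\end{equation*}
after rearranging. This completes the argument.

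The only delicate point is bookkeeping: the coefficients must line up so that the leftover $-\tfrac{s}{2}\|v\|^2$ appears exactly, and the $\tfrac{s^3}{2}\|x-x^\star\|^2$ term must be absorbed by the strong-convexity margin via $s^2L=\mu$. It is also worth noting that this argument does not use $\nabla J(x^\star)=0$, which may fail for a constrained optimizer. The inequality therefore holds for any reference point $x^\star$, which is what makes it usable when $x^\star$ lies on the boundary of $\mathcal C$.
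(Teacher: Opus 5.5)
Your proposal is correct and follows essentially the same route as the paper: differentiate $W$ along the dynamics, add $sW$ (equivalently, compare with $-sW-\frac{s}{2}\|v\|^2$), and use $s^2=\mu/L$ to reduce everything to the sign of $J(x)-J(x^\star)-\nabla J(x)^\intercal(x-x^\star)+\frac{\mu}{2}\|x-x^\star\|^2$, which is nonpositive by $\mu$-strong convexity. Your remark that the argument never uses $\nabla J(x^\star)=0$, and so holds for any reference point, is also accurate.
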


\begin{proof}
Let $\varepsilon:=x-x^\star$. Differentiating
\eqref{eq:accelerated-Lyapunov} gives
\begin{align*}
    \dot W
    &=
    \frac{1}{L}\nabla J(x)^\intercal v
    +
    (v+s\varepsilon)^\intercal
    \Big(
        -sv-\frac{1}{L}\nabla J(x)+q
    \Big).
\end{align*}
Adding $sW$ to both sides and using $s^2=\mu/L$ yields
\begin{align*}
    \dot{W} +sW
    ={}&
    -\frac{s}{2}\|v\|^2
    +
    (v+s\varepsilon)^\intercal q
    \\
    &+
    \frac{s}{L}
    \Big[
        J(x)-J(x^\star)
        -
        \nabla J(x)^\intercal\varepsilon
        +
        \frac{\mu}{2}\|\varepsilon\|^2
    \Big].
\end{align*}
The last term is nonpositive by the $\mu$-strong convexity.
\end{proof}

For A-SGF, define the acceleration correction
$
    \subscr{q}{CBF}(x,v)
    :=
    \subscr{a}{CBF}(x,v)
    -
    \subscr{a}{nom}(x,v).
    $ This leads to a classical-mechanics interpretation of the A-SGF,
    where the $\subscr{q}{CBF}$ acts as a generalized constraint force
    on the nominal variational dynamics. In fact, the A-SGF satisfies
    the forced Euler--Lagrange equations from the Lagrange--d'Alembert
    principle of mechanics~\cite{MZ-FB:04a},
\begin{equation*}
    \frac{\diff}{\diff t}
    \Big(
        \frac{\partial\mathcal{L}}{\partial v}
    \Big)
    -
    \frac{\partial\mathcal{L}}{\partial x}
    =
    e^{2st}\subscr{q}{CBF}(x,v).
\end{equation*}
Thus, the safety filter does not alter the nominal potential or
dissipation directly; rather, it applies a generalized safety force.

The previous inequality can also be interpreted as a dissipativity
equation for the storage function $W$ with the supply
$
    \subscr{\Delta}{CBF}(x,v)
    :=
    [
        v+s(x-x^\star)
    ]^\intercal
    \subscr{q}{CBF}(x,v).
$
Lemma~\ref{lem:accelerated-dissipation} gives
$
    \dot{W}
    \leq
    -sW
    -
    \frac{s}{2}\|v\|^2
    +
    \subscr{\Delta}{CBF}.
$
Therefore, the design of the HOCBF gains amounts to controlling the work done by
the generalized safety force.

Define $\nabla h(x) := [ \nabla h_i(x)]_{i = 1}^k\in\real^{n\times k}$. 
Let $\lambda\in\real_{\geq0}^k$ be a KKT multiplier of the
SOCBF-QP~\eqref{eq:SOCBF-QP}. By the KKT condition
\begin{equation}
\label{eq:q-cbf-kkt}
    \subscr{q}{CBF}
    =
    \nabla h(x)\lambda,
    \qquad
    \lambda_i
    \varphi_{i,2}
    (x,v;\subscr{a}{CBF})
    =
    0,
    \quad i\in[k].
\end{equation}

Using~\eqref{eq:FOCBF} and the concavity of $h_i$, we obtain
\begin{align}
    \subscr{\Delta}{CBF}
    &=
    \sum_{i=1}^k
    \lambda_i
    [
        \nabla h_i(x)^\intercal v
        +
        s\nabla h_i(x)^\intercal(x-x^\star)
    ]
    \notag\\
    &\leq
    \sum_{i=1}^k
    \lambda_i
    [
        \varphi_{i,1}
        +
        (s-c_{i,1})h_i(x)
        -
        sh_i(x^\star)
    ].
    \label{eq:Delta-CBF-c1-bound}
\end{align}
Since $h_i(x)\geq0$, $h_i(x^\star)\geq0$, and $\lambda_i\geq0$,
\eqref{eq:Delta-CBF-c1-bound}, this motivates the selection
$c_{i,1}\geq s$.  In fact, the rate-matched choice
\begin{equation}
\label{eq:c1-design}
    c_{i,1}=s,
    \qquad i\in[k],
\end{equation}
eliminates the terms in $h_i(x)$ exactly and gives
$
    \subscr{\Delta}{CBF}
    \leq
    \lambda^\intercal\varphi_1.
$
Hence, the first-order CBF relaxation is taken to match the
exponential scale of the nominal accelerated flow.

However, the SOCBF complementarity condition~\eqref{eq:q-cbf-kkt} acts
on $\varphi_{i,2}$ rather than on $\varphi_{i,1}$, and
$\lambda^\intercal\varphi_1$ does not vanish in general. The
second-order gain must therefore compensate for the mismatch. The
search for a gain that can guarantee both the convergence and
accelerated rate motivates the construction of a modified Lyapunov
function in the next section.

\subsection{Second-Order Gain Design}
\label{subsec:c2-corrected-energy}

Let $\lambda^\star\in\real_{\geq0}^k$ be an optimal KKT multiplier
of~\eqref{eq:origin-opt}. We start by defining the optimal
scaled-Lagrangian gap
\begin{equation*}
    F(x)
    :=
    \frac{1}{L}
    \Big[
        J(x)-J(x^\star)
        +
        \sum_{i=1}^k
        \lambda_i^\star
        \bigl(
            h_i(x^\star)-h_i(x)
        \bigr)
    \Big].
\end{equation*}
Denote its gradient as
$
    r(x)
    :=
    \nabla F(x)
    =
    \frac{1}{L}
    [
        \nabla J(x)-\nabla h(x)\lambda^\star
    ].
$
The KKT conditions of~\eqref{eq:origin-opt} give $r(x^\star)=0$.
Moreover, Assumption~\ref{assump:cvx} and the concavity of $h_i$ imply
\begin{equation}
\label{eq:F-strong-convexity}
    \nabla^2F(x)
    =
    \frac{1}{L}
    \Big[
        \nabla^2J(x)
        -
        \sum_{i=1}^k
        \lambda_i^\star\nabla^2h_i(x)
    \Big]
    \succeq
    s^2I.
\end{equation}
Consequently,
$
    F(x)
    \geq
    \frac{s^2}{2}
    \|x-x^\star\|^2.
$
We now define a Lyapunov function candidate 
\begin{equation}
\label{eq:shifted-energy}
    W_F(x,v)
    :=
    F(x)
    +
    \frac{1}{2}
    \|
        v+s(x-x^\star)
    \|^2.
\end{equation}

\begin{lemma}[Time derivative of $W_F$]
\label{lem:shifted-accelerated-dissipation}
By choosing $c_{i,1} =s$, for $i \in [k]$, the A-SGF satisfies
\begin{equation*}
    \dot{W}_F
    \leq
    -sW_F
    -
    \frac{s}{2}\|v\|^2
    +
    \Big(
        \lambda-\frac{\lambda^\star}{L}
    \Big)^\intercal
    \varphi_1.
  \end{equation*}
\end{lemma}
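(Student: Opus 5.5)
The plan is to repeat the computation of Lemma~\ref{lem:accelerated-dissipation} with $F$ in place of $\frac{1}{L}(J-J(x^\star))$. The point is to group the $\lambda^\star$-dependent terms so that they cancel by complementarity. I use only the strong convexity of $J$, not the Hessian bound~\eqref{eq:F-strong-convexity}. Let $\varepsilon:=x-x^\star$. By~\eqref{eq:q-cbf-kkt}, the A-SGF reads $\dot x=v$, $\dot v=-2sv-\frac{1}{L}\nabla J(x)+\nabla h(x)\lambda$. Since $\nabla F(x)=r(x)$,
\begin{equation*}
\dot W_F = r(x)^\intercal v + (v+s\varepsilon)^\intercal\Big(-sv-\tfrac{1}{L}\nabla J(x)+\nabla h(x)\lambda\Big).
\end{equation*}
I substitute $-\frac{1}{L}\nabla J = -r-\frac{1}{L}\nabla h\lambda^\star$. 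This gives $\dot W_F = r^\intercal v+(v+s\varepsilon)^\intercal(-sv-r)+(v+s\varepsilon)^\intercal\nabla h(x)(\lambda-\lambda^\star/L)$.

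Next I add $sW_F$ and expand, as in the proof of Lemma~\ref{lem:accelerated-dissipation}. The $r^\intercal v$ terms cancel. The velocity terms collapse to $-\frac{s}{2}\|v\|^2$, using $\frac{s}{2}\|v+s\varepsilon\|^2 - s\|v\|^2 - s^2 v^\intercal\varepsilon=-\frac{s}{2}\|v\|^2+\frac{s^3}{2}\|\varepsilon\|^2$. What remains is $s\big[F(x)-r^\intercal\varepsilon+\frac{s^2}{2}\|\varepsilon\|^2\big]$ plus the correction term. The correction term is split using $c_{i,1}=s$ in~\eqref{eq:FOCBF}, which gives $\nabla h_i(x)^\intercal v=\varphi_{i,1}-sh_i(x)$. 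Hence
\begin{equation*}
(v+s\varepsilon)^\intercal\nabla h_i(x) = \varphi_{i,1} + s\big(\nabla h_i(x)^\intercal\varepsilon - h_i(x)\big).
\end{equation*}

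For the $\lambda_i\ge 0$ part I reuse~\eqref{eq:Delta-CBF-c1-bound}. Concavity gives $\nabla h_i(x)^\intercal\varepsilon\le h_i(x)-h_i(x^\star)\le h_i(x)$, so that part is at most $\lambda_i\varphi_{i,1}$.

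The main obstacle is the $-\lambda_i^\star/L$ part, which has the wrong sign. Concavity only bounds $\nabla h_i(x)^\intercal\varepsilon - h_i(x)$ from above, so $-\frac{s\lambda^\star_i}{L}(\nabla h_i(x)^\intercal\varepsilon-h_i(x))$ cannot be dropped directly. The fix is not to use~\eqref{eq:F-strong-convexity} on the bracket, and to expand $F$ and $r$ explicitly instead:
\begin{equation*}
F(x)-r^\intercal\varepsilon = \tfrac{1}{L}\big[J(x)-J(x^\star)-\nabla J(x)^\intercal\varepsilon\big] + \tfrac{1}{L}\sum_i\lambda_i^\star\big[h_i(x^\star)-h_i(x)+\nabla h_i(x)^\intercal\varepsilon\big].
\end{equation*}
Multiplied by $s$, the second sum combines with the leftover $-\frac{s\lambda_i^\star}{L}(\nabla h_i(x)^\intercal\varepsilon-h_i(x))$. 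The $h_i(x)$ and $\nabla h_i(x)^\intercal\varepsilon$ terms cancel, and only $\frac{s}{L}\sum_i\lambda_i^\star h_i(x^\star)$ remains. This vanishes by the KKT complementarity $\lambda_i^\star h_i(x^\star)=0$.

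The remaining bracket is $\frac{s}{L}\big[J(x)-J(x^\star)-\nabla J(x)^\intercal\varepsilon+\frac{\mu}{2}\|\varepsilon\|^2\big]$, using $s^2=\mu/L$. It is nonpositive by $\mu$-strong convexity of $J$, exactly as in Lemma~\ref{lem:accelerated-dissipation}. Collecting terms gives $\dot W_F\le -sW_F-\frac{s}{2}\|v\|^2+\sum_i(\lambda_i-\lambda_i^\star/L)\varphi_{i,1}$, which is the claim. The key step I would double-check is this regrouping: the $\lambda^\star$ contributions from the bracket and from the correction must cancel exactly up to the complementarity term.
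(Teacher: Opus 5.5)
Your proof is correct. The regrouping you flagged checks out: $\frac{s}{L}\sum_i\lambda_i^\star[h_i(x^\star)-h_i(x)+\nabla h_i(x)^\intercal\varepsilon]-\frac{s}{L}\sum_i\lambda_i^\star[\nabla h_i(x)^\intercal\varepsilon-h_i(x)]=\frac{s}{L}\lambda^{\star\intercal}h(x^\star)=0$.

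The paper organizes the argument differently. It does not recompute $\dot W_F$ in the shifted dynamics \eqref{eq:shifted-ASGF}. Instead, it uses complementarity once to write $W_F=W-\frac{1}{L}\lambda^{\star\intercal}h(x)$, so that with $c_{i,1}=s$,
\[
\dot W_F+sW_F=(\dot W+sW)-\tfrac{1}{L}\lambda^{\star\intercal}\bigl(\nabla h(x)^\intercal v+s\,h(x)\bigr)=(\dot W+sW)-\tfrac{1}{L}\lambda^{\star\intercal}\varphi_1 .
\]
It then invokes Lemma~\ref{lem:accelerated-dissipation} together with \eqref{eq:Delta-CBF-c1-bound}.

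The ingredients are the same in both routes:
\begin{itemize}
\item strong convexity of $J$;
\item concavity of $h_i$ plus feasibility of $x^\star$ for the $\lambda$-part;
\item complementarity for the $\lambda^\star$-part.
\end{itemize}

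The paper's reduction makes it immediate that the $\lambda^\star$-contribution is exactly $-\frac{1}{L}\lambda^{\star\intercal}\varphi_1$, with no term-by-term bookkeeping. It also never meets the pitfall you identified, because Lemma~\ref{lem:accelerated-dissipation} already uses the strong convexity of $J$ rather than \eqref{eq:F-strong-convexity}.

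Your version is longer but self-contained. It works directly in the shifted coordinates $\dot v=-2sv-r+\nabla h(x)\eta$ that drive Proposition~\ref{prop:dot-mathcal-w}. It also makes explicit why applying the $s^2$-strong convexity of $F$ to the bracket would be lossy: it would leave the $\lambda^\star$-weighted concavity term with the wrong sign.
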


\begin{proof}
  By complementary slackness, $\lambda^{\star\intercal}h(x^\star)=0$,
  and by definition of $W$ and $W_F$ we have
  $ W_F = W - \frac{1}{L} \lambda^{\star\intercal}h(x)$.
  Moreover, $c_{i,1} =s$ gives $\varphi_1 = \nabla h(x)^\intercal v +
  sh$. Together with Lemma~\ref{lem:accelerated-dissipation}, the
  result follows.
\end{proof}

Unfortunately, the last term of the upper bound of $\dot{W}_F$ will
not dissipate automatically. This motivates a further tweaking of the
Lyapunov function to cancel that last term.

Before introducing the final Lyapunov function, we provide a mild
assumption that bounds the Hessian of $F$ and $h$.

\begin{assumption}\longthmtitle{Hessian on relevant compact set}
\label{assump:acc-curvature}
There exists a compact forward-invariant set
$\mathcal K\subseteq\mathcal S$, containing $(x(0),v(0))$, and
constants $M<\infty$ and $0 < \bar H_i<\infty$, s.t.
$
        \nabla^2F(x)
        \preceq
        MI 
$ and        
$
        -\nabla^2h_i(x)
        \preceq
        \bar H_i I
$ for $
        i\in[k],
$ 
$(x,v) \in \mathcal K$. By the $L$-smoothness of $J$, a valid
choice is $ M := 1+ \frac{1}{L} \sum_{i=1}^k \lambda_i^\star\bar H_i$, and $M=1$ for affine constraints. 
\oprocend
\end{assumption}

With the above assumption, $F$ is convex and $M$-smooth on the considered
region, which implies, for all $x \in \mathcal{K}$,
\begin{equation}
\label{eq:r-F-bound}
    \|r(x)\|^2
    \leq
    2M F(x).
\end{equation}

For simplicity, we take uniform gains of the form
\begin{equation}
    \label{eq:c2-common-choice}
    c_{i,2} = c_2, \qquad i\in[k].
\end{equation}
\begin{lemma}[Monotonicity w.r.t.~$c_2$]
\label{lem:q-cbf-c2-monotonicity}
Denote the $\subscr{q}{CBF}$ with chosen $c_2 > 0$ as $\subscr{q}{CBF}^{c_2}$.
For any fixed $(x,v)\in\mathcal S$ and $c_2\geq c_2^0$,
there is
$
    \|
        \subscr{q}{CBF}^{c_2}(x,v)
    \|
    \leq
    \|
        \subscr{q}{CBF}^{c_2^0}(x,v)
    \|.
$
\end{lemma}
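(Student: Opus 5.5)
The idea is to read $\subscr{q}{CBF}^{c_2}(x,v)$ as a Euclidean projection and compare two nested feasible sets. With $(x,v)$ fixed, substitute $a = \subscr{a}{nom}(x,v) + q$ in~\eqref{eq:SOCBF-QP}. Then $\subscr{q}{CBF}^{c_2}$ is the unique minimizer of $\frac12\|q\|^2$ over the polyhedron
\begin{equation*}
  Q_{c_2} := \setdef{q\in\real^n}{\nabla h_i(x)^\intercal q \geq -b_i(c_2),\; i\in[k]},
\end{equation*}
where $b_i(c_2) := \varphi_{i,2}(x,v;\subscr{a}{nom}(x,v))$ under the common gain~\eqref{eq:c2-common-choice}. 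In other words, $\subscr{q}{CBF}^{c_2}$ is the minimum-norm element, i.e.\ the projection of the origin, of the closed convex set $Q_{c_2}$. Feasibility and uniqueness at every gain considered are supplied by Proposition~\ref{prop:asgf-feasibility}. I will also check that raising $c_2$ keeps~\eqref{eq:feas-cond-2} valid with the same $q$, so nonemptiness of $Q_{c_2}$ carries over from $c_2^0$.

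The core step is to show $Q_{c_2^0}\subseteq Q_{c_2}$ whenever $c_2\geq c_2^0$. To do this, I regroup~\eqref{eq:SOCBF-inequal} as $\varphi_{i,2} = \dot{\varphi}_{i,1} + c_2\varphi_{i,1}$. This gives
\begin{equation*}
  b_i(c_2) = \dot{\varphi}_{i,1}(x,v;\subscr{a}{nom}) + c_2\,\varphi_{i,1}(x,v),
\end{equation*}
where the $c_2$-independent part collects the Hessian term, $c_{i,1}\nabla h_i^\intercal v$, and $\nabla h_i^\intercal\subscr{a}{nom}$. It follows that $b_i(c_2)-b_i(c_2^0) = (c_2-c_2^0)\varphi_{i,1}(x,v)\geq 0$, because $(x,v)\in\mathcal S$ forces $\varphi_{i,1}(x,v)\geq0$. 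Each half-space constraint is therefore relaxed (or left unchanged) as $c_2$ grows. Any $q$ with $\nabla h_i^\intercal q\geq -b_i(c_2^0)$ then also satisfies $\nabla h_i^\intercal q\geq -b_i(c_2)$, which proves the inclusion.

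Given the inclusion, the comparison is immediate. $\subscr{q}{CBF}^{c_2^0}$ lies in $Q_{c_2^0}\subseteq Q_{c_2}$, so it is feasible for the problem at gain $c_2$. Since $\subscr{q}{CBF}^{c_2}$ minimizes $\|q\|$ over $Q_{c_2}$, we get $\|\subscr{q}{CBF}^{c_2}\|\leq\|\subscr{q}{CBF}^{c_2^0}\|$.

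The only step needing care is the sign of $\varphi_{i,1}$, which must hold for all $i$ and not just the active ones. This is exactly where membership in the lifted safe set $\mathcal S$ from~\eqref{eq:lifted-safe-set} is used. The remaining steps are routine convex-analysis bookkeeping.
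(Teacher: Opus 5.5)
Your proof is correct and follows the same route as the paper: both use $\varphi_{i,2}^{c_2}=\varphi_{i,2}^{c_2^0}+(c_2-c_2^0)\varphi_{i,1}$ together with $\varphi_{i,1}\geq 0$ on $\mathcal S$ to show that the feasible set of the SOCBF-QP can only grow with $c_2$, so the minimum-norm correction cannot increase. Recasting the problem as the projection of the origin onto the polyhedron $Q_{c_2}$, and noting that nonemptiness carries over from $c_2^0$, only makes explicit what the paper leaves implicit.
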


\begin{proof}
Since $(x,v)\in\mathcal S$, we have
$\varphi_{i,1}(x,v)\geq0$ for every $i\in[k]$. Hence, for
$c_2'\geq c_2$,
$
    \varphi_{i,2}^{c_2'}
    =
    \varphi_{i,2}^{c_2}
    +
    (c_2'-c_2)\varphi_{i,1}
    \geq
    \varphi_{i,2}^{c_2}.
    $ Thus, every point feasible for the SOCBF-QP~\eqref{eq:SOCBF-QP}
    with gain $c_2$ remains feasible for the problem with gain $c_2'$.
    Since the SOCBF-QP minimizes $\frac12\|q\|^2$, enlarging feasible
    set cannot increase the minimum norm, proving
    the result.
\end{proof}

\begin{lemma}
\label{lem:uniform-multiplier-bound}
Suppose that there exists $c_2^0>0$ s.t. the condition of
Proposition~\ref{prop:asgf-feasibility} or
Corollary~\ref{cor:asgf-conservative} holds everywhere on a compact
set $\mathcal{K}^0 \subseteq \mathcal{S}$  with $c_{i,2}=c_2^0$. Then,
there exists a constant $\underline{\sigma}>0$, independent of $c_2 >
c_2^0$, s.t.
$   
    \|\nabla h(x)\nu\|
    \geq
    \underline{\sigma}\|\nu\| 
$,
for every $(x,v)\in\mathcal K^0$ and $\nu\in\real_{\geq0}^k$ satisfying
$
    \nu^\intercal
    \varphi_2^{c_2}
    \bigl(
        x,v;
        \subscr{a}{CBF}^{c_2}(x,v)
    \bigr)
    =0.
$
Moreover, if $\lambda(x,v)$ is any KKT multiplier of the SOCBF-QP with $c_2$,
then
$
    \|\lambda(x,v)\|
    \leq
    \frac{1}{\underline{\sigma}}
    \|\subscr{q}{CBF}^{c_2}(x,v)\|.
$
\end{lemma}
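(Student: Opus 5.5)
The plan is to produce, uniformly on $\mathcal K^0$ and for every $c_2>c_2^0$, a direction $d$ of bounded length that has a uniform positive inner product with every active constraint gradient. A duality-type estimate then gives the bound. First, complementarity $\nu^\intercal\varphi_2^{c_2}(x,v;\subscr{a}{CBF}^{c_2})=0$ with $\nu\geq0$ and $\varphi_2^{c_2}(x,v;\subscr{a}{CBF}^{c_2})\geq0$ forces $\nu_i=0$ for every $i\notin\mathcal A^{c_2}(x,v)$, where $\mathcal A^{c_2}$ denotes the SOCBF-QP active set for gain $c_2$. Suppose we have $d$ with $\|d\|\leq D$ and $\nabla h_i(x)^\intercal d\geq\delta>0$ for all $i\in\mathcal A^{c_2}(x,v)$, where $D$ and $\delta$ do not depend on $(x,v)$ or $c_2$. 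Then
\begin{equation*}
D\,\|\nabla h(x)\nu\|\geq \nu^\intercal\nabla h(x)^\intercal d\geq \delta\|\nu\|_1\geq\delta\|\nu\|,
\end{equation*}
so $\underline\sigma:=\delta/D$ works. The multiplier bound then follows directly. By \eqref{eq:q-cbf-kkt}, $\subscr{q}{CBF}^{c_2}=\nabla h(x)\lambda$ with $\lambda\geq0$ complementary, so $\nu=\lambda$ is admissible and $\|\subscr{q}{CBF}^{c_2}\|\geq\underline\sigma\|\lambda\|$.

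To build $d$, I would take the strictly feasible point $q$ supplied by Proposition~\ref{prop:asgf-feasibility} for gain $c_2^0$; in the setting of Corollary~\ref{cor:asgf-conservative}, this is $q=\beta\widetilde q$. Since $\varphi_{i,1}\geq0$ on $\mathcal S$, the identity in the proof of Lemma~\ref{lem:q-cbf-c2-monotonicity} gives $\varphi_{i,2}^{c_2}(x,v;q)\geq\varphi_{i,2}^{c_2^0}(x,v;q)>0$. For $i\in\mathcal A^{c_2}$ we have $\varphi_{i,2}^{c_2}(x,v;a^\star)=0$ with $a^\star=\subscr{a}{CBF}^{c_2}(x,v)$. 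Because $\varphi_{i,2}$ is affine in $a$ with gradient $\nabla h_i(x)$, setting $d:=q-a^\star$ gives
\begin{equation*}
\nabla h_i(x)^\intercal d=\varphi_{i,2}^{c_2}(x,v;q)\geq\varphi_{i,2}^{c_2^0}(x,v;q).
\end{equation*}
The right-hand side does not involve $c_2$. For the length, $\|d\|\leq\|q\|+\|\subscr{a}{nom}(x,v)\|+\|\subscr{q}{CBF}^{c_2}(x,v)\|$. By Lemma~\ref{lem:q-cbf-c2-monotonicity} the last term is at most $\|\subscr{q}{CBF}^{c_2^0}(x,v)\|$, which is continuous (locally Lipschitz, by Theorem~\ref{thm:asgf-wellposed}) and hence bounded on the compact set $\mathcal K^0$. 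The term $\subscr{a}{nom}$ is continuous and likewise bounded.

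The main obstacle is making the margin $\delta$ and $\|q\|$ uniform over $\mathcal K^0$, because $q$ is only given pointwise. I would use compactness. At each $(x_0,v_0)\in\mathcal K^0$ fix its $q_0$. By continuity of $\varphi_{i,2}^{c_2^0}(\cdot,\cdot;q_0)$, as noted in the proof of Proposition~\ref{prop:asgf-feasibility}, there is a closed neighborhood on which $\min_i\varphi_{i,2}^{c_2^0}(x,v;q_0)\geq\delta_0>0$. Extract a finite subcover indexed by $j=1,\dots,N$. On each piece use the fixed $q_j$, and set $\delta:=\min_j\delta_j>0$ and $D:=\max_j\|q_j\|+\sup_{\mathcal K^0}\|\subscr{a}{nom}\|+\sup_{\mathcal K^0}\|\subscr{q}{CBF}^{c_2^0}\|$. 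In the corollary case this is simpler still: the required $\beta$ depends continuously on $(x,v)$, and $\min_i\nabla h_i(x)^\intercal\widetilde q$ can be bounded below by the same covering argument.
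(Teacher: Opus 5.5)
Your proposal is correct and follows essentially the same route as the paper. A finite cover of the compact set $\mathcal K^0$ yields a uniform margin and a uniformly bounded strictly feasible point for gain $c_2^0$. Lemma~\ref{lem:q-cbf-c2-monotonicity} transfers both to every $c_2\ge c_2^0$, and the estimate $\delta\|\nu\|_1\le\|d\|\,\|\nabla h(x)\nu\|$ then gives $\underline\sigma=\delta/D$.

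One small repair: to bound $\|\subscr{q}{CBF}^{c_2^0}\|$ on $\mathcal K^0$ you cite Theorem~\ref{thm:asgf-wellposed}. Its CRC hypothesis is not available under the Proposition~\ref{prop:asgf-feasibility} hypothesis, but you do not need it. On each piece of the cover, $q_j$ is feasible for the $c_2^0$ QP, so minimality gives $\|\subscr{q}{CBF}^{c_2}\|\le\|\subscr{q}{CBF}^{c_2^0}\|\le\|q_j-\subscr{a}{nom}(x,v)\|$ directly. This is exactly how the paper obtains its bound $\|q_s-\subscr{q}{CBF}^{c_2}\|\le 2Q$.
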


\begin{proof}
We first establish a uniform strict-feasibility margin for the SOCBF-QP with
$c_2=c_2^0$. For every $z=(x,v)\in\mathcal K^0$, strict feasibility at $z$
implies that there exists a correction $q_z\in\real^n$ such that $
\varphi_{i,2}^{c_2^0} \bigl( z; \subscr{a}{nom}(z)+q_z \bigr) >0$ for $i\in[k]$.
By continuity, the same $q_z$ remains strictly feasible in a neighborhood
$\mathcal U_z$ of $z$. Hence, there exists $\epsilon_z>0$ such that $
\varphi_{i,2}^{c_2^0} \bigl( z'; \subscr{a}{nom}(z')+q_z \bigr)\geq \epsilon_z$,
for $ z'\in\mathcal U_z$, $i\in[k]$. Since $\mathcal K^0$ is compact, a finite
collection $\{\mathcal U_{z_j}\}_{j=1}^N$ covers $\mathcal K^0$. Define
$\epsilon := \min_{j\in[N]}\epsilon_{z_j}>0$ and $Q := \max_{j\in[N], \; z_j \in
\mathcal{K}^0}\|q_{z_j}\|<\infty$. Thus, for every $z\in\mathcal K^0$, there
exists a $q_s(z)$ s.t. $ \|q_s(z)\|\leq Q $ and $ \varphi_{i,2}^{c_2^0} \bigl(
z; \subscr{a}{nom}(z)+q_s(z) \bigr) \geq\epsilon $ for $i \in [k]$.

Lemma~\ref{lem:q-cbf-c2-monotonicity} implies $\Vert\subscr{q}{CBF}^{c_2}\Vert
\le \Vert \subscr{q}{CBF}^{c_2^0} \Vert \le \Vert q_s \Vert \le Q$. Now fix any
$c_2\geq c_2^0$. From the proof of Lemma~\ref{lem:q-cbf-c2-monotonicity}, there
is $ \varphi_{i,2}^{c_2} \bigl( x,v; \subscr{a}{nom}+q_s \bigr) \geq \epsilon$.
Therefore, the same $q_s$ is uniformly strictly feasible for every $c_2\geq
c_2^0$. It follows that $\nabla h_i(x)^\intercal(q_s-\subscr{q}{CBF}^{c_2})\geq
\epsilon$, for $i\in\mathcal A^{c_2}(x,v)$, where the last set denotes the
active index set of SOCBF-QP with $c_2$. Let $\nu\geq0$ satisfy the
complementarity condition in the statement. Hence, $ \epsilon\|\nu\|_1 \leq
(q_s-\subscr{q}{CBF}^{c_2})^\intercal\nabla h(x)\nu \leq
\|q_s-\subscr{q}{CBF}^{c_2}\| \|\nabla h(x)\nu\|$. With the inequality $
\|q_s-\subscr{q}{CBF}^{c_2}\| \leq 2Q$ we have $ \|\nabla h(x)\nu\| \geq
\frac{\epsilon}{2Q}\|\nu\|_1 \geq \frac{\epsilon}{2Q}\|\nu\|$. Thus, one may
choose $ \underline{\sigma} := \frac{\epsilon}{2Q}>0 $. Finally, let
$\lambda(x,v)$ be a KKT multiplier of the SOCBF-QP then $ \subscr{q}{CBF}^{c_2}
= \nabla h(x)\lambda(x,v)$. The result follows.
\end{proof}
Denote $\subscr{q}{CBF} = \subscr{q}{CBF}^{c_2}$, $\varphi_2 =
\varphi_2^{c_2}$ with a fixed choice $c_{i,2} = c_2$ throughout the
rest of the paper. With constant $\bar{c}_h :=
\frac{Q}{\underline{\sigma}} \sum_{i \in [k]} H_i$,
Lemma~\ref{lem:uniform-multiplier-bound} and
Assumption~\ref{assump:acc-curvature}, there is 
$
- \sum_{i=1}^{k}
\lambda_i(x,v) v^\intercal \nabla^2 h_i v \le \bar{c}_h \| v \|^2
$.

Recall that $\lambda$ (resp.~$\lambda^\star$) is a KKT multiplier
of~\eqref{eq:SOCBF-QP} (resp.~of~\eqref{eq:origin-opt}). Then
$\lambda^\intercal \varphi_2 = 0$. Define the shorthands $ \widehat{q}
:= \subscr{q}{CBF} - \frac{1}{L} \nabla h(x)\lambda^\star $, $ \eta :=
\lambda - \frac{\lambda^\star}{L}$ and $ \chi := [ v^\intercal
\nabla^2 h_i v ]_{i=1}^k$. Then, $\widehat{q} = \nabla h(x)\eta$, and
the time derivative of $v$ along A-SGF
\begin{equation}
    \label{eq:shifted-ASGF}
    \dot v = -2sv - r(x) + \widehat{q}.
\end{equation}
Together with the choice of $c_{i,1}$ in~\eqref{eq:c1-design}, we have
$ \varphi_2 = \nabla h(x)^\intercal (\widehat{q} - r - sv) + \chi +
c_2 \varphi_1$.
Multiplying the previous equation by $\eta^\intercal$, using
$\widehat{q} = \nabla h(x) \eta$ yields
\begin{equation}
    \label{eq:c2-eta-varphi2}
    c_2 \eta^\intercal \varphi_1 =  (r + sv)^\intercal \widehat{q}
                                   -
                                   \| \widehat{q} \|^2
                                   -
                                   \eta^\intercal \chi
                                   -
                                   \frac{1}{L}(\lambda^\star)^\intercal 
                                   \varphi_2.
\end{equation}
Equation~\eqref{eq:c2-eta-varphi2} exposes the favorable quadratic
term $-\|\widehat{q}\|^2$, but also produces a new mixed term $(r +
sv)^\intercal \widehat{q}$.  At this point, we take an auxiliary term
to modify $W_F$. For $\gamma>0$, define
\begin{equation}
\label{eq:P-gamma}
    P_\gamma(x,v)
    :=
    -
    \frac{s}{2}\|v\|^2
    -
    r(x)^\intercal v
    -
    \gamma F(x),
\end{equation}
and define the following function
\begin{equation}
\label{eq:corrected-storage}
    \mathcal{W}(x,v)
    :=
    W_F(x,v)
    +
    \frac{1}{c_2}P_\gamma(x,v).
\end{equation}

The candidate $\mathcal{W}$ will serve as the Lyapunov function of the
A-SGF, by selecting the gain $c_2$ appropriately. This choice should
play two roles: first, to make $\dot{\mathcal{W}} \le -s\mathcal{W}$;
second, to make $\mathcal W$ positive-definite. Consequently, we pick
$\gamma$ as
\begin{equation}
\label{eq:gamma-design}
    \gamma>\frac{2M}{s},
    \qquad
    \delta
    :=
    s\gamma-2M>0,
\end{equation}
and define the following parameters
\begin{align}
    C_\gamma
    :=
    \frac{s^2}{2}
    +
    \bar c_h
    +
    \frac{
        M(\gamma-2s)^2
    }{
        2(s\gamma-2M)
    },
    \; \;
    B_\gamma
    :=
    \gamma+\frac{M}{s}+4s.
    \label{eq:B-gamma}
\end{align}

\begin{figure*}[!htbp]
 \centering
 \includegraphics[width=0.90\textwidth]{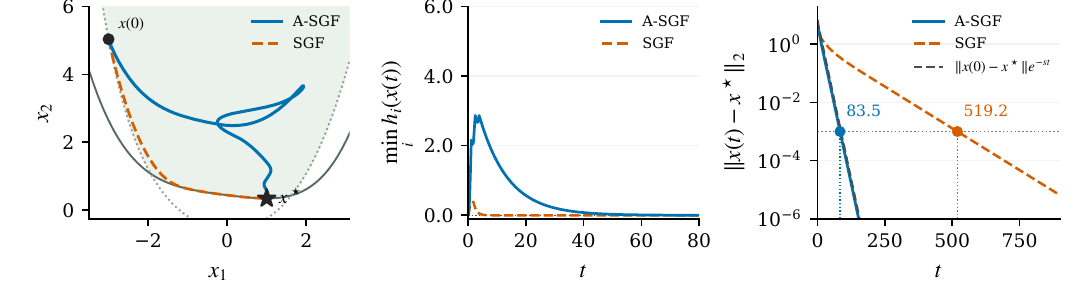}
 \caption{Nonlinear 2D example. (a) Trajectories of A-SGF and SGF. (b)
   Minimum constraints value. (c) Errors w.r.t.~time to show
   convergence; the marked times are when the error reaches and
   subsequently remains below $10^{-3}$.}
 \label{fig:2d-safety}
\end{figure*}

\begin{proposition}
  \longthmtitle{Time derivative of $\mathcal{W}$}
\label{prop:dot-mathcal-w}
Under Assumptions~\ref{assump:cvx} and \ref{assump:acc-curvature},
select gains of CBF uniformly as
in~\eqref{eq:c1-design},~\eqref{eq:c2-common-choice}, and choose
$\gamma$ by~\eqref{eq:gamma-design}. Then, it holds that
\begin{equation}
    \dot{\mathcal W}
    \leq
    -s\mathcal W
    -
    \Big(
        \frac{s}{2}
        -
        \frac{C_\gamma}{c_2}
    \Big)\|v\|^2
    -
    \frac{1}{c_2}\|\widehat q\|^2.
    \label{eq:dot-mathcal-w}
\end{equation}
\end{proposition}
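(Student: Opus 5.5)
The plan is to differentiate $\mathcal W=W_F+\frac{1}{c_2}P_\gamma$ term by term along~\eqref{eq:shifted-ASGF}, replace the indefinite term $\eta^\intercal\varphi_1$ using identity~\eqref{eq:c2-eta-varphi2}, and then close the estimate with strong convexity, the smoothness bound~\eqref{eq:r-F-bound}, and one Young inequality.

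\emph{Step 1 (the $W_F$ part).} Lemma~\ref{lem:shifted-accelerated-dissipation} with $c_{i,1}=s$ gives
$\dot W_F\le -sW_F-\frac{s}{2}\|v\|^2+\eta^\intercal\varphi_1$.
By~\eqref{eq:c2-eta-varphi2},
\begin{equation*}
\eta^\intercal\varphi_1=\frac{1}{c_2}\Big[(r+sv)^\intercal\widehat q-\|\widehat q\|^2-\eta^\intercal\chi-\frac{1}{L}(\lambda^\star)^\intercal\varphi_2\Big].
\end{equation*}
Three of these terms are then bounded separately:
\begin{itemize}
\item The last term is nonpositive, because $\lambda^\star\ge0$ and $\varphi_2(x,v;\subscr{a}{CBF})\ge0$ by QP feasibility.
\item For $-\eta^\intercal\chi=-\lambda^\intercal\chi+\frac1L\lambda^{\star\intercal}\chi$, the bound stated after Lemma~\ref{lem:uniform-multiplier-bound} gives $-\lambda^\intercal\chi\le\bar c_h\|v\|^2$.
\item Concavity of $h_i$ gives $\chi_i\le0$, hence $\frac1L\lambda^{\star\intercal}\chi\le0$.
\end{itemize}
Therefore
$c_2\eta^\intercal\varphi_1\le(r+sv)^\intercal\widehat q-\|\widehat q\|^2+\bar c_h\|v\|^2$.

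\emph{Step 2 (the $P_\gamma$ part).} We compute $\dot P_\gamma=-sv^\intercal\dot v-v^\intercal\nabla^2F(x)v-r^\intercal\dot v-\gamma r^\intercal v$ and substitute $\dot v=-2sv-r+\widehat q$. This yields
\begin{equation*}
\dot P_\gamma=2s^2\|v\|^2-v^\intercal\nabla^2F v+(3s-\gamma)r^\intercal v+\|r\|^2-(r+sv)^\intercal\widehat q.
\end{equation*}
The key point is that the mixed term $-(r+sv)^\intercal\widehat q$ cancels exactly the one produced in Step 1; this is why $P_\gamma$ was introduced. Using~\eqref{eq:F-strong-convexity}, we bound $-v^\intercal\nabla^2Fv\le -s^2\|v\|^2$.

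\emph{Step 3 (matching $-s\mathcal W$).} Write $-sW_F=-s\mathcal W+\frac{s}{c_2}P_\gamma$ and expand
$\frac{s}{c_2}P_\gamma=\frac{1}{c_2}\big[-\frac{s^2}{2}\|v\|^2-sr^\intercal v-s\gamma F\big]$.
Collecting all terms multiplied by $1/c_2$ shows that it remains to prove
\begin{equation*}
\frac{s^2}{2}\|v\|^2+(2s-\gamma)r^\intercal v+\|r\|^2-s\gamma F\le\Big(\frac{s^2}{2}+\frac{M(\gamma-2s)^2}{2\delta}\Big)\|v\|^2 .
\end{equation*}
The coefficient on the right is exactly $C_\gamma-\bar c_h$, which accounts for the $\bar c_h$ contribution from Step 1.

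\emph{Step 4 (the main obstacle).} The delicate part is absorbing the cross term $(2s-\gamma)r^\intercal v$ and the $\|r\|^2$ term without leaving any residual in $F$. Using~\eqref{eq:r-F-bound} on $\mathcal K$, we get $\|r\|^2-s\gamma F\le\|r\|^2-\frac{s\gamma}{2M}\|r\|^2=-\frac{\delta}{2M}\|r\|^2$, which is strictly negative by~\eqref{eq:gamma-design}. Young's inequality then gives $(2s-\gamma)r^\intercal v\le\frac{\delta}{2M}\|r\|^2+\frac{M(\gamma-2s)^2}{2\delta}\|v\|^2$. The two $\|r\|^2$ contributions cancel, and adding back the remaining $-\frac{s}{2}\|v\|^2$ and $-\frac1{c_2}\|\widehat q\|^2$ yields~\eqref{eq:dot-mathcal-w}.

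The only care needed is the sign bookkeeping of $\eta^\intercal\chi$ and $\lambda^{\star\intercal}\varphi_2$, and keeping the trajectory in $\mathcal K$ so that both $M$ and $\bar c_h$ are valid constants.
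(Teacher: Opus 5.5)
Your proposal is correct and follows the paper's proof essentially step by step. You use the same combination of Lemma~\ref{lem:shifted-accelerated-dissipation} with identity~\eqref{eq:c2-eta-varphi2}, the same exact cancellation of $(r+sv)^\intercal\widehat q$ through $\dot P_\gamma$, and the same sign arguments for $\eta^\intercal\chi$ and $\lambda^{\star\intercal}\varphi_2$; the only cosmetic difference is in Step~4, where you absorb the cross term against $\frac{\delta}{2M}\|r\|^2$ (via $F\geq\|r\|^2/(2M)$) rather than against $\delta F$ (via $\|r\|\leq\sqrt{2MF}$) as the paper does, and this yields the identical constant $\frac{M(\gamma-2s)^2}{2\delta}$.
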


\begin{proof}
Combining Lemma~\ref{lem:shifted-accelerated-dissipation}
and~\eqref{eq:c2-eta-varphi2} gives
\begin{align*}
    \dot{W}_F
    \leq{}&
    -sW_F
    -
    \frac{s}{2}\|v\|^2
    \\
    &+
    \frac{1}{c_2}
    \Big[
        (r+sv)^\intercal\widehat q
        -
        \|\widehat q\|^2
        -
        \eta^\intercal\chi
        -
        \frac{1}{L}
        \lambda^{\star\intercal}\varphi_2
    \Big].
\end{align*}
Since $\dot r=\nabla^2F(x)v$, differentiating \eqref{eq:P-gamma}
along~\eqref{eq:shifted-ASGF} gives
\begin{align*}
    \dot P_\gamma+sP_\gamma
    ={}&
    \frac{3s^2}{2}\|v\|^2
    +
    (2s-\gamma)r^\intercal v
    +
    \|r\|^2
    \\
    &-
    v^\intercal\nabla^2F(x)v
    -
    s\gamma F
    -
    (r+sv)^\intercal \widehat q.
\end{align*}
Hence, the terms $(r + sv)^\intercal \widehat{q}$ cancel exactly in
$\dot{\mathcal W}+s\mathcal W$. From concavity,
$\chi_i=v^\intercal\nabla^2h_i(x)v\leq0$. 
Thus, 
\begin{align*}
    -\eta^\intercal\chi
    =
    -\lambda^\intercal\chi
    +
    \frac{1}{L}
    \lambda^{\star\intercal}\chi
    \leq
    -\lambda^\intercal\chi
    \leq
    \bar c_h\|v\|^2,
\end{align*}
where the last inequality follows from
Assumption~\ref{assump:acc-curvature}, see the definition of
$\bar{c}_h$ after Lemma~\ref{lem:uniform-multiplier-bound}. Moreover,
$\lambda^{\star\intercal}\varphi_2\geq0$ and
\eqref{eq:F-strong-convexity} gives
$ v^\intercal\nabla^2F(x)v \geq
s^2\|v\|^2$. Using~\eqref{eq:r-F-bound} and~\eqref{eq:gamma-design},
\begin{equation*}
    \|r\|^2-s\gamma F
    \leq 2MF - s\gamma F 
    = -\delta F.
\end{equation*}
Finally, by Young's inequality, it holds that
\begin{align*}
    |(2s-\gamma)r^\intercal v|
    {} \le
    \delta F
    +
    \frac{
        M(\gamma-2s)^2
    }{
        2\delta
    }\|v\|^2.
\end{align*}
The substitution of these terms into $\dot{\mathcal{W}}$
yields~\eqref{eq:dot-mathcal-w}. 
\end{proof}

\begin{proposition}\longthmtitle{Positivity of $\mathcal{W}$}
\label{prop:positivity-mathcal-w}
Under the same conditions of Proposition~\ref{prop:dot-mathcal-w}, if
$
    c_2>B_\gamma,
$
then
\begin{equation}
\label{eq:corrected-storage-equivalence}
    \Big(
        1-\frac{B_\gamma}{c_2}
    \Big)
    W_F
    \leq
    \mathcal W
    \leq
    \Big(
        1+\frac{B_\gamma}{c_2}
    \Big)
    W_F.
\end{equation}
\end{proposition}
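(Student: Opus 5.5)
The plan is to show the two-sided bound $|P_\gamma(x,v)|\leq B_\gamma W_F(x,v)$ on $\mathcal K$. Once this holds, \eqref{eq:corrected-storage-equivalence} follows directly from $\mathcal W=W_F+\frac{1}{c_2}P_\gamma$, since
\begin{equation*}
W_F-\frac{B_\gamma}{c_2}W_F\leq\mathcal W\leq W_F+\frac{B_\gamma}{c_2}W_F .
\end{equation*}
The hypothesis $c_2>B_\gamma$ is used only to make the lower factor $1-B_\gamma/c_2$ positive, so that $\mathcal W$ inherits positive definiteness from $W_F$. The proposition then reduces to bounding each of the three terms of $P_\gamma$ in \eqref{eq:P-gamma} by a multiple of $W_F$.

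\emph{Step 1: control $\|v\|^2$ by $W_F$.} Set $\varepsilon:=x-x^\star$ and write $v=(v+s\varepsilon)-s\varepsilon$. Then
\begin{equation*}
\|v\|^2\leq 2\|v+s\varepsilon\|^2+2s^2\|\varepsilon\|^2 .
\end{equation*}
The strong convexity bound $F(x)\geq\frac{s^2}{2}\|\varepsilon\|^2$, which follows from \eqref{eq:F-strong-convexity}, gives $s^2\|\varepsilon\|^2\leq 2F$. Hence
\begin{equation*}
\|v\|^2\leq 4\Big(\tfrac12\|v+s\varepsilon\|^2\Big)+4F=4W_F .
\end{equation*}

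\emph{Step 2: bound the three terms of $P_\gamma$.}
\begin{itemize}
\item The kinetic term satisfies $\frac{s}{2}\|v\|^2\leq 2sW_F$ by Step 1.
\item For the mixed term, Young's inequality weighted by $s$ gives $|r^\intercal v|\leq\frac{1}{2s}\|r\|^2+\frac{s}{2}\|v\|^2$. Then \eqref{eq:r-F-bound}, which is valid on $\mathcal K$ by Assumption~\ref{assump:acc-curvature}, gives $\frac{1}{2s}\|r\|^2\leq\frac{M}{s}F\leq\frac{M}{s}W_F$. Step 1 gives $\frac{s}{2}\|v\|^2\leq 2sW_F$.
\item The last term satisfies $\gamma F\leq\gamma W_F$, because both parts of $W_F$ are nonnegative.
\end{itemize}
Summing the three bounds gives $|P_\gamma|\leq(\gamma+\frac{M}{s}+4s)W_F=B_\gamma W_F$, which matches \eqref{eq:B-gamma} exactly.

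The argument is elementary, and the only real care point is the mixed term $r^\intercal v$. The Young weight must be chosen as $s$, splitting $s/2$ against $1/(2s)$, to reproduce the constant $M/s+2s$ inside $B_\gamma$. The estimate also relies on \eqref{eq:r-F-bound}, so the conclusion should be stated for $(x,v)\in\mathcal K$, the compact forward-invariant set on which the Hessian bound $\nabla^2F\preceq MI$ holds.
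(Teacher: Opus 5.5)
Your proof is correct and follows the paper's argument essentially line by line. You bound $\|v\|^2\le 4W_F$, split $r^\intercal v$ by Young's inequality with weight $s$ together with \eqref{eq:r-F-bound}, sum the three terms to get $|P_\gamma|\le B_\gamma W_F$, and conclude from $\mathcal W=W_F+P_\gamma/c_2$; you also spell out the derivation of $\|v\|^2\le 4W_F$ and correctly restrict the claim to $\mathcal K$, where \eqref{eq:r-F-bound} is asserted, both of which the paper leaves implicit.
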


\begin{proof}
  From~\eqref{eq:shifted-energy}, $F(x) \leq W_F(x,v)$,
\begin{equation*}
    W_F
    \geq
    \frac{s^2}{2}\|x-x^\star\|^2
    +
    \frac{1}{2}
    \|
        v+s(x-x^\star)
    \|^2,
\end{equation*}
and
$
    \|v\|^2\leq 4 W_F.
$
    From Young's inequality
    and~\eqref{eq:r-F-bound},
\begin{align*}
    |r^\intercal v|
    \leq
    \frac{1}{2s}\|r\|^2
    +
    \frac{s}{2}\|v\|^2
    \leq
    \Big(
        \frac{M}{s}+2s
    \Big)W_F.
\end{align*}
It follows from~\eqref{eq:P-gamma} that
\begin{equation*}
    |P_\gamma|
    \leq
    \Big(
        \gamma+\frac{M}{s}+4s
    \Big)W_F
    =
    B_\gamma W_F.
\end{equation*}
Since $\mathcal W=W_F+P_\gamma/c_2$,
\eqref{eq:corrected-storage-equivalence} follows. 
\end{proof}
\begin{theorem}
\longthmtitle{CBF gain conditions for $1/\sqrt{\kappa}$ convergence}
\label{thm:acc-preserving-cbf-gains}
Pick $\gamma$ as in~\eqref{eq:gamma-design} and $c_{i,1} = s$. Let
Assumption~\ref{assump:cvx} and~\ref{assump:acc-curvature} hold.
Suppose that the A-SGF is well-defined on $\mathcal{S}$ with a $c_2^0
> B_\gamma > 0$.  Define the compact set $\mathcal{K}^0 =
\setdef{(x,v)\in\mathcal S} {\mathcal W^{c_2^0}(x,v)\le
(1+B_\gamma/c_2^0)W_F(x(0),v(0))}$. Let $\bar c_h^0$ and $C_\gamma^0$
be the corresponding bounds on $\mathcal K^0$. Then, any
$c_2>\max\{2C_\gamma^0/s,c_2^0\}$ makes $\mathcal W$
in~\eqref{eq:corrected-storage} a valid Lyapunov function for the
A-SGF and retains the nominal exponential-rate dependence
$s=1/\sqrt{\kappa}$ over $\mathcal{K}^0$.
\end{theorem}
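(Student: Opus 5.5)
The argument combines Propositions~\ref{prop:dot-mathcal-w} and~\ref{prop:positivity-mathcal-w} on the compact set $\mathcal K^0$, with $\mathcal K^0$ playing the role of the set $\mathcal K$ in Assumption~\ref{assump:acc-curvature}. Once we know the trajectory stays in $\mathcal K^0$, the constants $M$, $\bar c_h^0$ and $C_\gamma^0$ are valid along it. The bound $\bar c_h^0$ is obtained from Lemma~\ref{lem:uniform-multiplier-bound} applied with $c_2^0$. Its margin $\underline\sigma$ and bound $Q$ do not depend on $c_2\ge c_2^0$, so $C_\gamma^0$ also works for every larger $c_2$.

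\textbf{Step 1 (positivity and decrease).} Fix $c_2>\max\{2C_\gamma^0/s,c_2^0\}$. Since $c_2>c_2^0>B_\gamma$, Proposition~\ref{prop:positivity-mathcal-w} gives
\[
(1-B_\gamma/c_2)W_F\le\mathcal W\le(1+B_\gamma/c_2)W_F.
\]
Here $W_F\ge \frac{s^2}{2}\|x-x^\star\|^2+\frac12\|v+s(x-x^\star)\|^2$ vanishes only at $(x^\star,0)$, so $\mathcal W$ is positive definite about the equilibrium identified in Theorem~\ref{thm:equilibrium-characterization}. Since $c_2>2C_\gamma^0/s$, the coefficient $\frac s2-\frac{C_\gamma^0}{c_2}$ in~\eqref{eq:dot-mathcal-w} is positive. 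Dropping the nonpositive terms then gives $\dot{\mathcal W}\le -s\mathcal W$ wherever the estimates hold.

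\textbf{Step 2 (invariance of $\mathcal K^0$).} This step is the main obstacle. It is circular: the derivative estimate needs the state to lie in $\mathcal K^0$, and forward invariance of $\mathcal K^0$ needs the derivative estimate. A further difficulty is that $\mathcal K^0$ is defined through $\mathcal W^{c_2^0}$, not through the $\mathcal W$ associated with $c_2$.

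I would argue by a first-exit-time contradiction. Let $T$ be the supremum of times for which the trajectory remains in $\mathcal K^0$. Corollary~\ref{cor:asgf-safety} keeps the trajectory in $\mathcal S$, and local Lipschitzness (Theorem~\ref{thm:asgf-wellposed}) guarantees that solutions exist. On $[0,T)$, Step~1 gives $\mathcal W(t)\le e^{-st}\mathcal W(0)$, and therefore
\[
W_F(t)\le \frac{1+B_\gamma/c_2}{1-B_\gamma/c_2}\,e^{-st}W_F(0).
\]
To return to $\mathcal W^{c_2^0}$, I would use the identity $\mathcal W^{c_2^0}=W_F+P_\gamma/c_2^0$ together with $|P_\gamma|\le B_\gamma W_F$. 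This shows $\mathcal W^{c_2^0}$ stays strictly below the sublevel threshold $(1+B_\gamma/c_2^0)W_F(0)$ for all $t>0$ along the trajectory. If this comparison of constants is too tight, I would instead shrink $\mathcal K^0$ or redefine it as a sublevel set of $W_F$. By continuity, strict inequality at $T$ contradicts exit, so $T=\infty$. Compactness of $\mathcal K^0$ follows from the coercivity of $W_F$ combined with closedness of $\mathcal S$.

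\textbf{Step 3 (rate).} On $\mathcal K^0$ for all $t\ge0$ we have $\mathcal W(t)\le e^{-st}\mathcal W(0)$. Through the sandwich inequality and $W_F\ge\frac{s^2}{2}\|x-x^\star\|^2$, this gives
\[
\|x(t)-x^\star\|^2\le \frac{2}{s^2}\,\frac{1+B_\gamma/c_2}{1-B_\gamma/c_2}\,W_F(0)\,e^{-st}.
\]
This is exponential convergence with rate $s=1/\sqrt\kappa$, matching the nominal \texttt{NAG-SC} flow. It also makes $\mathcal W$ a valid Lyapunov function for the A-SGF on $\mathcal K^0$, as claimed in Theorem~\ref{thm:acc-preserving-cbf-gains}.
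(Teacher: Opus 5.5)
Your Steps 1 and 3 follow the paper. Step 2, which you rightly flag as the crux, does not close as written.

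Chaining the sandwich bounds through $W_F$ gives
\[
\mathcal W^{c_2^0}(t)\le\Bigl(1+\tfrac{B_\gamma}{c_2^0}\Bigr)W_F(t)\le\Bigl(1+\tfrac{B_\gamma}{c_2^0}\Bigr)\frac{1+B_\gamma/c_2}{1-B_\gamma/c_2}\,e^{-st}W_F(0).
\]
Because $\frac{1+B_\gamma/c_2}{1-B_\gamma/c_2}>1$, the right-hand side exceeds the threshold $(1+B_\gamma/c_2^0)W_F(0)$ for every $t<\frac1s\ln\frac{1+B_\gamma/c_2}{1-B_\gamma/c_2}$. So your claim that $\mathcal W^{c_2^0}$ stays strictly below the threshold for all $t>0$ does not follow. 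Your exit-time argument then gives no contradiction when the exit time $T$ is small, and that is exactly the case you need to exclude. Going through $W_F$ twice (an upper bound, then a lower bound) loses a multiplicative constant that the exponential factor recovers only after a positive time.

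The missing ingredient is a direct comparison of the two Lyapunov functions. It uses only the one-sided bound $P_\gamma\le B_\gamma W_F$: for $c_2\ge c_2^0$,
\[
\Bigl(1+\tfrac{B_\gamma}{c_2}\Bigr)\mathcal W^{c_2^0}-\Bigl(1+\tfrac{B_\gamma}{c_2^0}\Bigr)\mathcal W^{c_2}
=\Bigl(\tfrac{1}{c_2^0}-\tfrac{1}{c_2}\Bigr)\bigl(P_\gamma-B_\gamma W_F\bigr)\le 0 .
\]
Combine this with $\mathcal W^{c_2}(t)\le e^{-st}\mathcal W^{c_2}(0)\le e^{-st}(1+B_\gamma/c_2)W_F(0)$. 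The factor $1+B_\gamma/c_2$ cancels exactly, leaving
\[
\mathcal W^{c_2^0}(t)\le e^{-st}\Bigl(1+\tfrac{B_\gamma}{c_2^0}\Bigr)W_F(0).
\]
This is strict for $t>0$ and closes the exit-time argument for the set $\mathcal K^0$ as defined in the statement; it is the paper's route.

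Your fallback does not rescue the stated theorem. Shrinking $\mathcal K^0$ makes invariance harder, not easier. Switching to a sublevel set of $W_F$ proves a result about a different set. It would also require a threshold independent of $c_2$, such as $\frac{1+B_\gamma/c_2^0}{1-B_\gamma/c_2^0}W_F(0)$. Otherwise $C_\gamma^0$, computed on that set, would depend on the very $c_2$ you are choosing.
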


\begin{proof}
Since $c_2^0>B_\gamma$, Proposition~\ref{prop:positivity-mathcal-w} guarantees
that $\mathcal W^{c_2^0}$ is positive definite and coercive, while
\eqref{eq:corrected-storage-equivalence} gives $\mathcal W^{c_2^0}(x(0),v(0))
\le(1+B_\gamma/c_2^0)W_F(x(0),v(0))$; hence $\mathcal K^0$ is compact and
contains $(x(0),v(0))$. 
For any $c_2\ge c_2^0$, Lemma~\ref{lem:q-cbf-c2-monotonicity} gives
$\bar c_h\le\bar c_h^0$ on $\mathcal K^0$, and hence $C_\gamma\le
C_\gamma^0$. Thus, for the stated choice of $c_2$,
Propositions~\ref{prop:dot-mathcal-w} and
\ref{prop:positivity-mathcal-w} imply $\dot{\mathcal
W}^{c_2}\le-s\mathcal W^{c_2}$ as long as the trajectory remains in
$\mathcal K^0$. Since $\mathcal
W^{c_2}(x,v)=W_F(x,v)+P_\gamma(x,v)/c_2$ and $P_\gamma\le B_\gamma
W_F$, for $c_2\ge c_2^0$ we have $\mathcal W^{c_2^0}\le
[(1+B_\gamma/c_2^0)/(1+B_\gamma/c_2)]\mathcal W^{c_2}$. Therefore,
$\mathcal W^{c_2^0}(x(t),v(t)) \le
e^{-st}(1+B_\gamma/c_2^0)W_F(x(0),v(0))$. Hence the trajectory cannot
leave $\mathcal K^0$. Consequently, $\mathcal W^{c_2}(x(t),v(t)) \le
e^{-st}\mathcal W^{c_2}(x(0),v(0))$, proving the result.
\end{proof}

\section{Numerical Examples}
\label{sec:numerical-examples}
We now implement the A-SGF with the following numerical case.
Let $x = [x_1,x_2]\in\real^2$, $a=0.08+0.1e^{1/2}$. Consider the cost
and constraint functions
\begin{align*}
 J(x)&=\tfrac12(x_1-1)^2+50(x_2-0.01 + 0.2e^{1/2})^2,\\
 h_1(x)&=x_2-0.02x_1^4-0.2e^{x_1/2}+a(x_1-1),\\
 h_2(x)&=x_2-0.008(x_1+2)^4-0.3e^{-x_1}+1.
\end{align*}
The objective $J$ is $1$-strongly convex and $100$-smooth. Both
constraint functions are $C^\infty$ and concave. We set the initial
point on the boundary with $x(0) = [-3, -0.992 + 0.3e^3]^\intercal$.
The unique constrained optimizer is
$x^\star=[1,0.02+0.2e^{1/2}]^\intercal$. The parameter of A-SGF are
$c_{i,1} = 0.1 = \sqrt{1/100}$ and $c_{i,2} = 10$. The results are
shown in Figure~\ref{fig:2d-safety}. From the left subfigure we can
see A-SGF retains the convergence rate of $s = 0.1$.

\section{Conclusion}
This letter introduced the A-SGF for convex optimization, which
converges to the unique constrained optimizer while preserving anytime
feasibility. We established conditions for well-posedness and derived
explicit HOCBF gain conditions under which A-SGF retains the
convergence rate of the nominal accelerated flow.

\bibliographystyle{IEEEtran}
\bibliography{alias,arxiv_ref}

\end{document}